\newtheorem{thm}{Theorem}
\newtheorem{lemma}[thm]{Lemma}
\newtheorem{proposition}[thm]{Proposition}
\newdefinition{definition}{Definition}
\newdefinition{remark}{Remark}
\newproof{proof}{Proof}
\newproof{pot}{Proof of Theorem \ref{thm2}}
\newcommand{\field}[1]{\mathbb{#1}}
\newcommand{\R}{\field{R}}
\newcommand{\N}{\field{N}}
\newcommand{\C}{\field{C}}
\newcommand{\CC}{{\mathcal C}}
\newcommand{\OO}{{\mathcal O}}
\newcommand{\NN}{{\mathbf N}}
\newcommand{\PP}{{\mathbf P}}
\renewcommand{\Re}{\mathop{\rm Re}}
\renewcommand{\Im}{\mathop{\rm Im}}
\newcommand{\isdef}{\stackrel{\text{\tiny def}}{=}}
\begin{document}
\begin{frontmatter}

\title{On a conjecture of A.~Magnus concerning the asymptotic behavior of the recurrence coefficients of the
generalized Jacobi polynomials}

\author{A.~Foulqui\'{e} Moreno}
\ead{foulquie@ua.pt}
\address{Department of Mathematics and Research Unity ``Matem\'{a}tica e Aplica\c{c}\~{o}es", University of Aveiro,\\Campus Universit\'ario de Santiago, 3810-193 Aveiro, Portugal}
\author{A.~Mart\'{\i}nez-Finkelshtein}
\ead{andrei@ual.es}
\address{Department of Statistics and Applied Mathematics,
University of Almer\'{\i}a, SPAIN, and\\
Instituto Carlos I de F\'{\i}sica Te\'{o}rica y Computacional,
Granada University, Spain}
\author{V.L. Sousa\corref{cor1}}
\ead{vsousa@ua.pt}
\address{EBI/JI Dr. Manuel M. Machado and Research Unity ``Matem\'{a}tica e Aplica\c{c}\~{o}es", University of Aveiro,\\Campus Universit\'ario de Santiago, 3810-193 Aveiro, Portugal}

\cortext[cor1]{Corresponding author}

\begin{abstract}
In 1995 Magnus \cite{Magnus1995} posed a conjecture about the asymptotics of the recurrence coefficients of orthogonal polynomials with respect to the
weights on $[-1,1]$ of the form
\begin{equation*}
  \left(  1-x\right)  ^{\alpha} \left(  1+x\right)  ^{\beta}\left|
x_{0}-x\right|  ^{\gamma} \times
\begin{cases}
B, & \text{for }x\in\left[  -1,x_{0}\right)
\text{,}\\
A, & \text{for }x\in\left[  x_{0},1\right]  \text{,}%
\end{cases}
\end{equation*}
with $A, B>0$, $\alpha, \beta, \gamma>-1$, and $x_{0}\in\left(-1,1\right)  $.  We show rigorously that Magnus' conjecture is correct even in a more general situation, when the weight above has an extra factor, which is analytic in a neighborhood of $[-1,1]$ and positive on the interval. The proof is based on the steepest descendent method of Deift and Zhou applied to the non-commutative Riemann-Hilbert problem characterizing the orthogonal polynomials. A feature of this situation is that the local analysis at $x_0$ has to be carried out in terms of confluent hypergeometric functions.
\end{abstract}

\begin{keyword}
Orthogonal polynomials, asymptotics, Riemann-Hilbert method, steepest descent, recurrence coefficients, generalized Jacobi weights

\MSC 42C05; 33C15; 33C45
\end{keyword}

\end{frontmatter}

\section{Introduction and statement of results}

\subsection{Introduction} \label{sec:intro}

A.~Magnus considered in \cite{Magnus1995} a weight function which is smooth and positive on the whole
interval of orthogonality up to a finite number of points where algebraic singularities occur. His primary goal was to investigate the influence of these singular points on the asymptotic behavior of the recurrence coefficients of the corresponding orthogonal polynomials (generalized Jacobi polynomials). 
Based on numerical evidence, he conjectured explicit formulas for the asymptotics of these coefficients (as the degree of the polynomial grows) for the weights of the form
\begin{equation}\label{wheight-Magnus}
  \left(  1-x\right)  ^{\alpha} \left(  1+x\right)  ^{\beta}\left|
x_{0}-x\right|  ^{\gamma} \times
\begin{cases}
B, & \text{for }x\in\left[  -1,x_{0}\right)
\text{,}\\
A, & \text{for }x\in\left[  x_{0},1\right]  \text{,}%
\end{cases}
\end{equation}
with $A, B>0$ and $\alpha, \beta, \gamma>-1$, and $x_{0}\in\left(-1,1\right)  $. This weight combines at a single point both an algebraic singularity and a jump.

So far, Magnus' conjecture has been confirmed rigorously in some special cases (see below);
our main goal is to establish it in its full generality, and even to extend it further. Namely, we consider polynomials that are orthogonal on a finite interval $[-1,1]$ with respect to a modified Jacobi weight of the form
 \begin{equation} \label{Definitiew}
        w_{c,\gamma}(x) = (1-x)^{\alpha} (1+x)^{\beta} |x_0-x|^{\gamma} h(x)\, \Xi_{c}(x), \qquad x \in [-1,1],
    \end{equation}%
where $x_0 \in (-1,1)$, $\alpha, \beta, \gamma > -1$, $h$ is real analytic and strictly positive on $[-1,1]$, and $ \Xi_{c}$ is a step-like function, equal to $1$ on $[-1,x_0)$ and $c^2>0$ on $[x_0,1]$.

The proof is based on the nonlinear steepest descent analysis of Deift and Zhou, introduced in \cite{MR94d:35143} and further developed in \cite{MR2001m:05258a, MR98b:35155, MR96d:34004}, which is based on the Riemann--Hilbert characterization of orthogonal polynomials due to Fokas, Its, and
Kitaev \cite{Fokas92}. A crucial contribution to this approach is \cite{MR2087231}, where the complete asymptotic expansion for the orthogonal polynomials with respect to the Jacobi weight modified by a real analytic and strictly positive function was obtained (in notation \eqref{wheight-Magnus}, $A=B$ and $\gamma=0$). The first application of this technique to weights with a jump discontinuity is due to \cite{Its07b}, where the authors considered an exponential weight on $\R$ with a jump at the origin.

Let $P_n(x) = P_n(x;w_{c,\gamma})$ be the monic polynomial of degree $n$ %
orthogonal with respect to the weight $w_{c,\gamma}$ on $[-1,1]$,
    \[
        \int_{-1}^1 P_n(x;w_{c,\gamma}) x^k w_{c,\gamma}(x) \ dx = 0, \qquad
        \mbox{for $k = 0,1, \dots, n-1$.}
    \]
It is well known (see e.g.~\cite{szego:1975}) that $\{P_n\}$
satisfy the three-term recurrence relation
    \begin{equation} \label{RecRelphin}
        P_{n+1}(z) = (z-b_n) P_n(z) - a_n^2 P_{n-1}(z).
    \end{equation}
The central result of this paper is:
\begin{thm} \label{theoremRecCoef}
The recurrence coefficients $a_n$ and $b_n$ of orthogonal polynomials corresponding to the %
generalized Jacobi weight \eqref{Definitiew} have a complete asymptotic expansion of the form%
$$
 a_n  = \dfrac 12 -\sum_{k=1}^{\infty}\frac{A_k(n)}{n^k}, \quad  b_n = -\sum_{k=1}^{\infty}\frac{B_k(n)}{n^k},
$$\label{asymptotics abn}%
as $n\to \infty$, where for every $k\in \N$ the coefficients $A_k(n)$ and $B_k(n)$ are bounded in $n$. In particular,
  \begin{align} \label{asymptotics An}
    A_1(n) &= -\dfrac {\sqrt{1-x_{0}^2}}{2} \sqrt{ \dfrac {\gamma^2}{4}+\dfrac{\log^2c}{\pi^2}}%
    \cos\left[2n \arccos x_0+2\dfrac{\log c}{\pi}\log \left(4n\sqrt{1-x_{0}^2} \right)-\Theta\right],\\%
  \label{asymptotics Bn}
    B_1(n) &= -\sqrt{1-x_{0}^2} \sqrt{ \dfrac {\gamma^2}{4}+\dfrac{\log^2c}{\pi^2}} \cos\left[(2n+1)\arccos x_0+2\dfrac{\log c}{\pi} \log \left(4n\sqrt{1-x_{0}^2} \right)-\Theta\right],
         \end{align}%
where
    \begin{equation}\label{definitionTheta}
    \begin{split}
    \Theta= \left( \alpha+\dfrac {\gamma} {2} \right) \pi- &\left(\alpha+\beta+\gamma \right)\arccos x_0 - 2\arg\Gamma \left(\dfrac\gamma 2-i\dfrac{\log c}{\pi} \right) \\& - \arg\left(\dfrac \gamma2-i\dfrac{\log c}{\pi} \right) -\frac{\sqrt{1-x_{0}^{2}}}{\pi}\fint_{-1}^{1}\frac{\log h\left(  t\right)
}{\sqrt{1-t^{2}}}\frac{dt}{t-x_{0}},
     \end{split}
     \end{equation}%
and $\fint$ denotes the integral understood in terms of its principal value.
\end{thm}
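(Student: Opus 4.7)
My plan is to apply the Deift--Zhou nonlinear steepest descent method to the Fokas--Its--Kitaev Riemann--Hilbert (RH) characterization of $P_n$. One sets up the $2\times 2$ matrix-valued function $Y(z)$ analytic off $[-1,1]$, with the upper-triangular jump involving $w_{c,\gamma}$ in the $(1,2)$ entry across $(-1,1)$, and the normalization $Y(z)=(I+Y_1/z+Y_2/z^2+\cdots)\,\mathrm{diag}(z^n,z^{-n})$ at infinity. The recurrence coefficients $a_n$ and $b_n$ are expressible through the entries of $Y_1$ and $Y_2$, so a complete asymptotic expansion of $Y$ at infinity in powers of $1/n$ translates directly into the claimed expansions.

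\textbf{Standard transformations.} I would then perform the familiar chain $Y\to T\to S$: a normalization at infinity by the $g$--function associated with the arcsine equilibrium measure on $[-1,1]$ (the support is unaffected by the weight), followed by opening of lenses on each side of $(-1,1)$ using the factorization of the oscillatory jump. The resulting RH problem has jumps on the lens contours that are exponentially close to the identity away from $[-1,1]$, while the jump on $(-1,1)$ still carries the factor $|x-x_0|^\gamma h(x)\Xi_c(x)$. The global (outer) parametrix $N$ is constructed from the Szeg\H{o} function of the analytic, strictly positive modification of the weight; at the hard edges $\pm 1$ one then inserts the standard Bessel parametrices of \cite{MR2087231}.

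\textbf{Local parametrix at $x_0$: the main obstacle.} The crux of the proof is the construction of the local parametrix $P^{(x_0)}$ in a small disk around $x_0$, where the weight simultaneously exhibits an algebraic singularity $|x-x_0|^\gamma$ and a multiplicative jump by $c^2$. A model RH problem with these jump data on four rays emanating from the origin admits an explicit solution expressible in terms of confluent hypergeometric (Kummer or Whittaker) functions with parameters depending on $\gamma$ and $\log c/\pi$; this is analogous to, but strictly more general than, the construction used in \cite{Its07b} for exponential weights with a pure jump, since here an algebraic factor is superimposed. The main work is writing down this model, verifying the matching of $P^{(x_0)}$ with $N$ on the boundary of the disk to within an error of size $O(1/n)$, and tracking the explicit $\Gamma$--function prefactors generated by the large--$z$ asymptotics of the Kummer function, since these will supply the $\arg\Gamma$ contributions in \eqref{definitionTheta}.

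\textbf{Small-norm analysis and extraction of $\Theta$.} Once the full parametrix $P$ is assembled, the ratio $R=S\cdot P^{-1}$ solves a RH problem whose jumps are uniformly of the form $I+O(1/n)$ and supported on the three boundary circles. A Neumann series for the associated singular integral equation yields a complete asymptotic expansion $R(z)=I+\sum_{k\ge 1}R_k(z)/n^k$, whose coefficients can be computed by iteratively inserting the expansions of the local parametrices into the jump equation. Unwinding the transformations $R\to S\to T\to Y$ and reading off the $1/z$ and $1/z^2$ coefficients of the resulting expansion of $Y$ at infinity provides the asymptotic series for $a_n$ and $b_n$ with bounded oscillatory coefficients. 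The explicit phase \eqref{definitionTheta} is then assembled from three ingredients: the oscillatory factor $e^{2in\arccos x_0}$ produced by the $g$--function, the value of the Szeg\H{o} function at $x_0$ (which contributes the principal value integral involving $\log h$ as well as the $(\alpha+\gamma/2)\pi$ and $-(\alpha+\beta+\gamma)\arccos x_0$ terms), and the phase carried by the confluent hypergeometric model (which supplies the $\log c$ and $\arg\Gamma$ terms).
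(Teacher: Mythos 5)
Your proposal follows essentially the same route as the paper: the Fokas--Its--Kitaev RH characterization, the standard $Y\to T\to S$ transformations, the Szeg\H{o}-function outer parametrix with Bessel parametrices at $\pm1$, a confluent hypergeometric local parametrix at $x_0$ whose parameter combines $\gamma/2$ with $i\log c/\pi$, and a small-norm argument whose first-order residues assemble the phase $\Theta$ exactly from the three sources you identify. The only minor deviation is that the model problem at $x_0$ actually requires jumps on eight rays rather than four (two extra rays arise because the analytic continuation of $|x-x_0|^\gamma$ changes branch across a curve through $x_0$), but this is an implementation detail rather than a difference in method.
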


\begin{remark}
 We can rewrite the result of this theorem as
  \begin{equation*}
    a_n = \dfrac{1}{2} -\dfrac{M}n \cos\left[2n \arccos x_0-2\mu\log \left(4n\sqrt{1-x_{0}^2} \right)-\Theta\right]
         +\OO\left(  \frac{1}{n^{2}} \right),%
  \end{equation*}
    \begin{equation*}
    b_n =  -\dfrac{2M}n \cos\left[(2n+1)\arccos x_0-2\mu\log \left(4n\sqrt{1-x_{0}^2} \right)-\Theta\right]
         +\OO\left(  \frac{1}{n^{2}} \right),
         \end{equation*}%
as $n\to\infty$, where%
   \begin{equation*}\label{definitionmu}
    \mu=-\dfrac{\log c}{\pi}, \, \, \, \,
        M=\dfrac {\sqrt{1-x_{0}^2}}{2} \sqrt{ \dfrac {\gamma^2}{4}+\mu^2} ,
    \end{equation*}%
and $\Theta$ defined by \eqref{definitionTheta}. %

A comparison of these formulas with those in \cite{Magnus1995} (setting $h(x)\equiv B$ and $c^2=A/B$) shows that Magnus' conjecture on the asymptotic behavior of the recurrence coefficients holds true for weights of the form \eqref{wheight-Magnus}. Observe that this is a slight extension of the original statement of Magnus: (i) we allow for an extra real analytic and strictly positive  factor $h$ in the weight, and (ii) we can replace the error term $o(1/n)$ in \cite{Magnus1995}  by a more precise $\OO(1/n^2)$.
\end{remark}

Theorem  \ref{theoremRecCoef} generalizes some previous known results about the asymptotics of the recurrence coefficients. To mention a few, weight $w_{1,0}$ was considered in \cite{MR2087231}, $w_{1,\gamma}$  is a particular case of the weight studied in \cite{Van:2003}, and $w_{c,0}$ was matter of attention in \cite{FMS}.

The proof is based on  the steepest descent analysis of the non-commutative Riemann-Hilbert problem characterizing the orthogonal polynomials $P_{n}$.  In theory, this approach allows to compute all coefficients $A_{k}$ and $B_{k}
$ in \eqref{asymptotics An}--\eqref{asymptotics Bn}. However, the computations are cumbersome and their complexity increases with $k$.

Most of the steps of the steepest descent analysis below are standard and well described in the literature, see e.g. \cite{MR2001f:42037, MR2000g:47048, MR2087231}. The main feature of the situation treated here in comparison with the classical Jacobi weight is the singularity of the weight of orthogonality at $x_0$. 
The local behavior of $P_{n}$'s at $x_{0}$ is described in terms of the confluent hypergeometric functions. Such functions appeared already in the Riemann-Hilbert analysis for the weight $w_{c,0}$ in \cite{FMS} and \cite{Its07b}, and will work also in the general situation studied here. However, the parameter describing the family of these functions is complex; its real part depends on the degree of the algebraic singularity $\gamma$ and its imaginary part is a function of the size of the jump $c^{2}$ in the weight $w_{c,\gamma}$. Also, for $c=1$ these confluent hypergeometric functions degenerate into the Bessel functions, in correspondence with \cite{MR2087231}.

Interestingly enough, the confluent hypergeometric functions appear in the scaling limit (as the number of particles goes to infinity) of the correlation functions of the pseudo-Jacobi ensemble, see  \cite{Borodin:2001xr}. This ensemble corresponds to a sequence of weights of the form
\begin{equation}\label{borodin}
(1+x^{2})^{-n-\Re(s)} e^{2\Im(s) \arg(1+i x)}, \quad x\in \R, 
\end{equation}
where $n$ is the degree of the polynomial and $s$ a complex parameter. The connection between both problems becomes apparent if we perform the inversion $x\mapsto 1/x$ in \eqref{borodin}; this creates at the origin an algebraic singularity with the exponent $\Re(s)$ and a jump depending on $\Im(s)$.

In the next Section we state the Riemann-Hilbert problem for the orthogonal polynomials and perform the steepest descent analysis; as a result, Theorem \ref{theoremRecCoef} is proved in Section 3. For  the sake of brevity, the description of the standard steps is rather sketchy; an interested reader may find the missing details in the literature cited above, and especially in  \cite{FMS}. However, the local parametrix (the Riemann-Hilbert problem in a neighborhood of the singularity) at $x_{0}$ is analyzed in full detail. The same problem has appeared very recently in an independent work of Deift, Its and Krasovsky \cite{Deift:oe} on the asymptotics of  {T}oeplitz and {H}ankel determinants.   

\section{The steepest descent analysis} \label{sec:RHanalysis}

\subsection{The Riemann-Hilbert problem and first transformations}

Following Fokas, Its and Kitaev \cite{Fokas92}, we characterize the orthogonal polynomials in terms of the unique solution $\mathbf Y$
of the following $2\times2$ matrix valued Riemann-Hilbert (RH) problem: for $n\in \N$,
\begin{enumerate}
\item[(Y1)] $\mathbf Y$ is analytic in $\C\setminus [-1,1]$.
\item[(Y2)] On $(-1,x_0) \cup (x_0,1)$, $\mathbf Y$ possesses continuous boundary values $\mathbf Y_+$ (from the upper half
plane) and $\mathbf Y_-$ (from the lower half plane), and
\[
\mathbf Y_{+}(x)=\mathbf Y_{-}(x)\,
\begin{pmatrix}
1 & w _{c,\gamma}  (x) \\
0 & 1
\end{pmatrix}.
\]
\item[(Y3)] $\mathbf  Y(z)=\left(  \mathbf{I}+\OO\left( 1/z \right)  \right) \, z^{n \sigma_{3}}$, as $z\rightarrow\infty$,
where all terms are $2 \times 2 $ matrices,  $\mathbf I$ is the identity  and  $\sigma_3=\begin{pmatrix}
1 & 0 \\ 0 & -1
\end{pmatrix}$ is the third Pauli matrix.\footnote{In what follows, for $a\in \C \setminus \{0\}$ and  $b\in \C$, $a^{ b\sigma_{3}}$  we use the notation
$$
a^{b \sigma_{3}} \isdef \begin{pmatrix}
a^{b}  & 0\\
0 & 1/a^{b}
\end{pmatrix}.
$$
}
\item[(Y4)]  if $(\zeta,s)\in \{(-1,\beta), (x_0,\gamma), (1,\alpha)\}$ then for $z\rightarrow \zeta $, $z\in\mathbb{C}\backslash\left[  -1,1\right]  $,
\[
\mathbf Y(z)= \begin{cases}
\mathcal O
\begin{pmatrix}
1 & \left\vert z-\zeta\right\vert ^{s}\\
1 & \left\vert z-\zeta\right\vert ^{s}%
\end{pmatrix}, & \text{if }s<0;\\
\mathcal O \begin{pmatrix}
1 & \log\left\vert z-\zeta\right\vert \\
1 & \log\left\vert z-\zeta\right\vert
\end{pmatrix},
  & \text{if }s=0;\\
\mathcal O
\begin{pmatrix}
1 & 1\\
1 & 1
\end{pmatrix},
& \text{if }s>0.%
\end{cases}
\]
\end{enumerate}%
Standard arguments (see e.g.~\cite{MR2087231}) show that this RH problem has a unique solution given by
\begin{equation}
\mathbf  Y\left(  z, n\right)  =\left(
\begin{array}
[c]{cc}%
P_{n}\left(  z\right)  & \CC\left(  P_{n}w_{c,\gamma}\right)  \left(  z\right) \\
-2\pi ik_{n-1}^{2}P_{n-1}\left(  z\right)  & -2\pi ik_{n-1}^{2}\CC\left(
P_{n-1}w_{c,\gamma}\right)  \left(  z\right)
\end{array}
\right)  \text{,} \label{sol Y}%
\end{equation}
where $P_{n}$ is the monic orthogonal polynomial of degree $n$\ with respect
to $w_{c,\gamma}$, $p_n(x) = p_n(x;w_{c,\gamma})$ is the corresponding orthonormal polynomial,
    \[
        p_n(x) = k_n P_n(x),
    \]
where $k_n > 0$ is the leading coefficient of $p_n$, %
and $\CC\left(  \cdot \right)  $\ is the Cauchy transform on $\left[  -1,1\right]  $
defined by
\[
\CC\left(  f\right)  \left(  z\right)  =\frac{1}{2\pi i}\int_{-1}^{1}%
\frac{f\left(  x\right)  }{x-z}\, dx\,.
\]
Note that $\mathbf Y$ and other matrices introduced hereafter depend on $n$; however, to simplify notation we omit the explicit reference to $n$.

The first transformations of the Deift-Zhou steepest descendent method are standard, and up to slight variations match those described in %
\cite[subsection 2.2]{FMS}. Hence, we will omit the details, highlighting basically the differences with the cited reference. The reader should keep in mind also that in \cite{FMS} the analysis is made for a singularity fixed at $x_0=0$; however, extending it to the general case of $x_0\in (-1,1)$ is straightforward. %

We start by defining
\begin{equation}
\mathbf T\left(  z \right)  \isdef 2^{n\sigma_{3}} \mathbf Y\left(  z \right)  \varphi\left(
z\right)  ^{-n\sigma_{3}}, \label{sol T}%
\end{equation}%
where
\begin{equation}\label{phi}
    \varphi\left(  z\right)  =z+\sqrt{z^{2}-1}%
\end{equation}%
is the conformal map from $\mathbb C \setminus [-1,1]$ onto
the exterior of the unit circle, with the branch of $\sqrt{z^2-1}$ analytic in $\mathbb C \setminus [-1,1]$
and that behaves like $z$ as $z \to \infty$.

In order to perform the second transformation we need to extend the definition of the weight of orthogonality to a neighborhood of the interval $[-1,1]$. 
By assumptions, $h$ is a holomorphic function in a neighborhood $U$ of $[-1,1]$, and positive on this interval. For any Jordan arc $\Gamma$,  intersecting $[-1,1]$ transversally at $x_{0}$ and dividing $U$ into two connected components, we denote by $\Sigma_5$ its intersection with the upper half plane, and by $\Sigma_6$ its intersection with the lower half plane, oriented as shown in Figure \ref{fig:Sigma5-6}. Contours $ \Sigma_5 \cup \Sigma_6 \cup \R$ divide $U$ into four open domains (``quadrants''), that we denote by $Q_{\pm}^{L,R}$ as depicted. Finally, let $Q^{L}$ (resp., $Q^{R}$) be the connected component of $U\setminus \Gamma $ 
containing $-1$ (resp., $+1$).

\begin{figure}[htb]
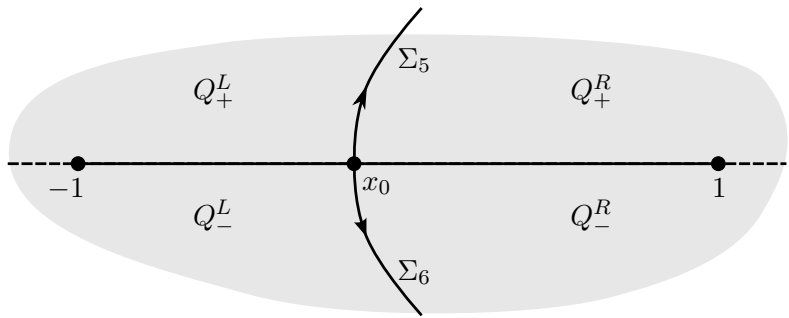

\centering \begin{overpic}[scale=1.3]%
{Sigma5-6bis}%
     \put(5.5,15.5){$\small -1 $}
\put(90,15.5){$\small 1$} 
  \put(45.5,16.5){$\small x_0$} 
  \put(24,12){$\small Q_{-}^L$}
    \put(24,28){$\small Q_{+}^L$}
      \put(72,28){$\small Q_{+}^R$}
    \put(72,12){$\small Q_{-}^R$}
    \put(50,5){$\small \Sigma_{6}$}
    \put(50,32){$\small \Sigma_{5}$}
\end{overpic}
\caption{Division of the neighborhood of $[-1,1]$  in four regions.}
\label{fig:Sigma5-6}
\end{figure}
%

Now we set
\begin{equation}\label{defwanalytic}
    w(z)\isdef h(z)\, \left(  1-z\right)  ^{\alpha}\left(  1+z\right)  ^{\beta} \times \begin{cases}
\left(  x_{0}-z\right)  ^{\gamma}, & \ z\in Q^L \setminus \left(-\infty, -1\right],\\
\left(  z-x_{0}\right)  ^{\gamma}, & \ z\in Q^R \setminus \left[1, +\infty\right),%
\end{cases}
\end{equation}
where the principal branches of the power functions are taken. In this way, $w$ is defined and holomorphic in $\widetilde U\isdef U\setminus \left( \left(-\infty, -1\right] \cup \left[1, +\infty \right) \cup \Gamma\right)$, and $w(x)>0$ for $x \in (-1,1)\setminus x_0$. %
Setting
\[
\Xi_{c}\left(  z\right)  = 
\begin{cases}
1 &   z\in Q^L\\
c^2 &   z\in Q^R,
\end{cases}
\]
we extend also
\begin{equation} \label{defwcanalytic}
w_{c,\gamma}(z) \isdef w(z)\, \Xi_{c}(z),
\end{equation}
to a holomorphic function in $\widetilde U$.

We describe now the next transformation consisting in opening of lenses or contour deformation.

\begin{figure}[htb]
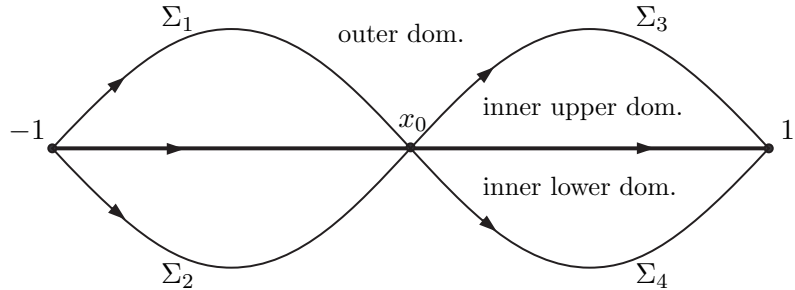

\centering \begin{overpic}[scale=1.5]%
{lenses1}%
     \put(0,21){$\small -1 $}
\put(96,21){$\small 1$}
  \put(48.5,22.5){$\small x_0$}
  \put(19,35){$\small \Sigma_{1}$}
    \put(19,3){$\small \Sigma_{2}$}
      \put(78,35){$\small \Sigma_{3}$}
    \put(78,3){$\small \Sigma_{4}$}
    \put(59,14){\small inner lower dom.}
    \put(59,24){\small inner upper dom.}
    \put(41,33){\small outer dom.}
\end{overpic}
\caption{First lens opening.}
\label{fig:lenses1}
\end{figure}

We build the four contours $\Sigma_i$ lying in $\widetilde U$ (except for their end points) such that $\Sigma_{1}$ and $\Sigma_{3}$ are in the upper half plane, and $\Sigma_{1}$ and $\Sigma_{2}$ are in the left half plane, and oriented ``from $-1$ to $1$'' but now through $x_0$ (see Fig.~\ref{fig:lenses1}).   This construction defines three domains: the inner upper domain, bounded by $[-1,1]$ and the curves $\Sigma_{1}$ and $\Sigma_{3}$; the inner lower domain, bounded by $[-1,1]$ and the curves $\Sigma_{2}$ and $\Sigma_{4}$, and finally the outer domain, bounded by curves $\Sigma_i$ and containing the infinity.
Denote $\Sigma \isdef [-1,1]\cup \bigcup_{k=1}^4\Sigma_k$.

Using the matrix $\mathbf T$ from \eqref{sol T} we define 
\begin{equation}
\mathbf S\left(  z\right)  \isdef
\begin{cases}
\mathbf T\left(  z\right),  & \text{for $z$ in the outer domain, }\\
\mathbf T(z)\, \begin{pmatrix}
1 & 0\\
-\frac{1}{w_{c,\gamma}(z)}\varphi^{-2n}(z) & 1
\end{pmatrix},
  & \text{for $z$ in the inner upper domain, }\\[3mm]
\mathbf T(z)\, \begin{pmatrix}
1 & 0\\
\frac{1}{w_{c,\gamma}(z)}\varphi^{-2n}(z) & 1
\end{pmatrix},
   & \text{for $z$ in the inner lower domain.}%
\end{cases}
  \label{sol S}%
\end{equation}
Then $\mathbf S$ is a solution of a new RH problem, now with jumps on $\Sigma$, that are easy to compute explicitly. The uniqueness is guaranteed if we impose the additional local requirement:  as $z\rightarrow x_0$,  $z\in\mathbb{C}\backslash \Sigma$,
\begin{itemize}
\item for $-1<\gamma<0$, \, $%
\mathbf S\left(  z\right)  =\mathcal O
\begin{pmatrix}
1 & \left\vert z-x_0\right\vert ^{\gamma}\\
1 & \left\vert z-x_0\right\vert ^{\gamma}%
\end{pmatrix}
  \text{, \ \ \ as }z\rightarrow x_0 $,%
\item for $\gamma=0$:%
\[
\mathbf S(z)=
\begin{cases}
\mathcal O
\begin{pmatrix}
1 & \log\left\vert z-x_0 \right\vert \\
1 & \log\left\vert z-x_0 \right\vert
\end{pmatrix},
 & \text{as }z\rightarrow x_0\text{ from the outer domain,}\\
\mathcal O
\begin{pmatrix}
\log\left\vert z-x_0 \right\vert  & \log\left\vert z-x_0 \right\vert \\
\log\left\vert z-x_0 \right\vert  & \log\left\vert z-x_0 \right\vert
\end{pmatrix},
  & \text{as }z\rightarrow x_0\text{ from the inner domains,}%
\end{cases}
\]
\item for $\gamma>0$:%
\[
\mathbf S(z)=
\begin{cases} \mathcal O
\begin{pmatrix}
1 & 1\\
1 & 1
\end{pmatrix},
 & \text{as }z\rightarrow x_0, \text{ from the outer domain,}\\
\mathcal O
\begin{pmatrix}
\left\vert z-x_0 \right\vert ^{-\gamma} & 1\\
\left\vert z-x_0 \right\vert ^{-\gamma} & 1
\end{pmatrix},
  & \text{as }z\rightarrow x_0, \text{ from the inner domain.}%
\end{cases}
\]
\end{itemize}

\subsection{Outer parametrix}

In the next step, which is also standard, we build the so-called outer parametrix for the RH problem for $\mathbf S$ in terms of the Szeg\H{o} function $D(\cdot, w_{c,\gamma})$ corresponding to the weight $w_{c,\gamma}$. Namely, we construct the $2\times 2$ matrix $N$ that satisfies
\begin{enumerate}
\item[(N1)] $\mathbf N$ is analytic in $\mathbb{C}\backslash\left[  -1,1\right]$;

\item[(N2)] $
\mathbf N_{+}(x)=\mathbf N_{-}(x)
\begin{pmatrix}
0 & w_{c,\gamma}\left(  x\right) \\
-w_{c,\gamma}\left(  x\right)  ^{-1} & 0
\end{pmatrix}$,\quad $x\in\left(
-1,x_0\right)  \cup (x_0,1) $;

\item[(N3)] 
$\mathbf N(z)=\mathbf {I}+\OO\left(  1/z\right)$, as $z\rightarrow\infty$.
\end{enumerate}%
The solution is given by
\begin{equation}
\mathbf N\left(  z\right)  \isdef D_{\infty}^{\sigma_{3}} \mathbf A(z)
D\left(  z, w_{c,\gamma}\right)  ^{-\sigma_{3}}, \label{sol-N}%
\end{equation}
and we will describe the three factor appearing in the r.h.s. of \eqref{sol-N}. Matrix $\mathbf A$ is
\begin{equation}
\label{defmatrixA}
\begin{split}
\mathbf A(z)\isdef \begin{pmatrix}
A_{11} &  A_{12}\\
-A_{12} & A_{11} \end{pmatrix}, \quad%
A_{11}(z) = \frac{\varphi\left(
z\right)  ^{1/2}}{\sqrt{2}\left(  z^{2}-1\right)  ^{1/4}}, \\
A_{12}(z) =\frac{i\varphi\left(
z\right)  ^{-1/2}}{\sqrt{2}\left(  z^{2}-1\right)  ^{1/4}}=\frac{i}{\varphi(z)}\, A_{11}(z) ,
\end{split}
\end{equation}
with the main branches of the roots, in such a way that $A_{11}$ is analytic in
$\mathbb{C}\backslash\left[  -1,1\right]  $ with $A_{11} \left(  z\right)\rightarrow1$, and $A_{12} \left(  z\right)\rightarrow 0$, as $z\rightarrow\infty$. %
The Szeg\H{o} function $D(\cdot, w_{c,\gamma})$ for $w_{c,\gamma}$ is given by
\begin{equation}\label{SzegoTotal}
    D(z, w_{c,\gamma}) = D(z, h)D(z, w_{1,\gamma})D(z, \Xi_c)\,,
\end{equation}%
where
\begin{equation}\label{SzegoPartial1}
D(z, h)= \exp\left( \sqrt{1-z^2}\,  \mathcal C \left( \frac {\log h(t)}{\sqrt{1-t^2}} \right) (z) \right), \quad
    D(z, w_{1,\gamma}) =\dfrac{\left(  z-1\right)
^{ \alpha/2 }\left(  z+1\right)  ^{\beta/2} \left(  z-x_0\right)  ^{\gamma/2}}{
\varphi^{(\alpha+\beta+\gamma )/2}(z)   },
\end{equation}%
and%
\begin{equation}\label{SzegoPartial2}
D(z, \Xi_c) = c \, \exp\left(  -\lambda \log\left( \dfrac{1-zx_0-i\sqrt{z^{2}-1}}{z-x_0}\right)\right)  ,
\end{equation}%
with%
\begin{equation}\label{def_lambda}
 \lambda\isdef   i\dfrac{\log c}{\pi},
\end{equation}%
where we take the main branches of $\left(  z-1\right)^{ \alpha/2 }$, $\left(  z+1\right)  ^{\beta/2}$, $(z-x_{0})^{\gamma/2}$ 
and $\sqrt{z^{2}-1}$ that are positive for $z>1$, as well as the main branch of the logarithm (see \cite[section 2.3]{FMS} for a detailed computation). Finally,%
\begin{equation}
D_{\infty}\isdef D(\infty, w_{c,\gamma})=\sqrt{c} \, D \left(  \infty, h \right)    2
^{-(\alpha+\beta+\gamma)/2} \, e^{i\lambda \arcsin x_0} \, >0. \label{Doo}%
\end{equation}%
Some of the properties of this function are summarized in the following lemma:

\begin{lemma}
\label{Lem-D+} The Szeg\H{o} function $D(\cdot, w)$ for the weight $w$ defined in \eqref{defwanalytic} exhibits the following boundary behavior:
\begin{equation}\label{boundaryValuesFinal}
 \lim_{\stackrel{z\to x\in  (-1, 1), }{\pm \Im z>0}} D(z, w) = \sqrt{w(x)}\,   \exp\left(  \pm i \widehat{ \Phi}(x) \right),
\end{equation}%
with
\begin{align}\label{def_PhinoC}
\Phi\left(  x\right)   &  =\frac{\pi \alpha }{2}-\dfrac { \alpha+\beta +\gamma}{2} \arccos x-\frac{\sqrt{1-x^{2}}}{2\pi}\fint_{-1}%
^{1}\frac{\log h\left(  t\right)  }{\sqrt{1-t^{2}}}\frac{dt}{t-x},\\
\widehat{\Phi}\left(  x\right)   &  =\left\{
\begin{array}
[c]{ll}%
\Phi\left(  x\right)  +\frac{\pi \gamma}{2}, & -1<x<x_{0}\\
\Phi\left(  x\right), & x_{0}<x<1
\end{array}
\right.
\end{align}%
Furthermore, for the step function $\Xi_c$,
\begin{equation*}
 \lim_{\stackrel{z\to x\in  (-1, x_0) \cup (x_0,1), }{\pm \Im z>0}} D(z, \Xi_c) = \sqrt{\Xi_c(x)}\,   \exp\left(  \mp  i\, \dfrac{\log c}{\pi} \,  \log\left| \dfrac{1-x_0x+ \sqrt{\left(1-x^{2}\right)\left(1-x_0^{2}\right)}}{x-x_0}\right| \right),
\end{equation*}%
and
\begin{equation}
D\left(  z, \Xi_c \right)  =c^{1\pm \frac{i}{\pi}\, \log\left(\frac{z-x_0}{2(1-x_0^2)} \right)}   \left[  1+o\left(
1\right)  \right],  \text{ as }z\rightarrow x_0\text{, }  \pm \Im %
z>0. \label{D-0}%
\end{equation}
\end{lemma}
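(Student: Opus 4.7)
The proof is a direct computation from the defining formulas \eqref{SzegoTotal}--\eqref{SzegoPartial2}, using the multiplicative splitting $D(z,w) = D(z,h)\cdot D(z,w_{1,\gamma})$ for the first claim and treating the jump factor $D(z,\Xi_c)$ separately. The principal obstacle throughout is the careful tracking of the branches of $\log$ and $\sqrt{z^2-1}$ near the jump point $x_0$, where the argument contributions from several factors must be matched across the two half-planes.

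For \eqref{boundaryValuesFinal}, I treat each factor in turn. For $D(z,h)$, I apply Plemelj--Sokhotski to the Cauchy transform $\mathcal{C}(\log h(t)/\sqrt{1-t^2})$: its upper and lower limits at $x\in(-1,1)$ equal $\pm\log h(x)/(2\sqrt{1-x^2})$ plus a common Cauchy principal-value integral. Multiplication by $\sqrt{1-z^2}_\pm = \pm\sqrt{1-x^2}$ (which follows from $\sqrt{z^2-1}_+ = i\sqrt{1-x^2}$ on $(-1,1)$) causes the two $\pm$ signs to combine into the sign-independent real term $\log h(x)/2$, producing $\sqrt{h(x)}$ upon exponentiation, while the principal-value integral keeps a single $\pm$ sign and contributes the $h$-dependent part of $\pm i\Phi(x)$. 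For $D(z,w_{1,\gamma})$, I read off the boundary values of each principal-branch power factor on $(-1,1)$: $(z-1)^{\alpha/2}_\pm = (1-x)^{\alpha/2}e^{\pm i\pi\alpha/2}$, $(z+1)^{\beta/2}_\pm = (1+x)^{\beta/2}$, $(z-x_0)^{\gamma/2}_\pm$ which equals $(x-x_0)^{\gamma/2}$ on $(x_0,1)$ and $|x-x_0|^{\gamma/2}e^{\pm i\pi\gamma/2}$ on $(-1,x_0)$, and $\varphi(z)^{(\alpha+\beta+\gamma)/2}_\pm = e^{\pm i(\alpha+\beta+\gamma)\arccos x/2}$ (using $\varphi_+(x) = e^{i\arccos x}$). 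Their product equals $\sqrt{w_{1,\gamma}(x)}$ times the remaining piece of $\pm i\widehat\Phi$, the extra $\pm i\pi\gamma/2$ on $(-1,x_0)$ producing exactly the jump of $\widehat\Phi$ across $x_0$.

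For the boundary values of $D(z,\Xi_c)$ on $(-1,x_0)\cup(x_0,1)$, I substitute $\sqrt{z^2-1}_\pm = \pm i\sqrt{1-x^2}$ into \eqref{SzegoPartial2} and split $\log\psi_\pm = \log|\psi_\pm| + i\arg\psi_\pm$. The crucial bookkeeping is the argument: $\arg\psi_+ = 0$ on $(x_0,1)$ (where $\psi_+$ is a positive real), whereas on $(-1,x_0)$ the denominator $z-x_0$ has $\arg\to\pi$ from above, forcing $\arg\psi_+ = -\pi$. Since $-\lambda = -i(\log c)/\pi$ is purely imaginary, the quantity $(\log c/\pi)\arg\psi_+$ enters the exponent as a real term: on $(x_0,1)$ it vanishes and $|D_+| = c$, while on $(-1,x_0)$ it equals $-\log c$ and the prefactor $c$ is cancelled, giving $|D_+| = 1$. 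In both cases $|D_+(x)| = \sqrt{\Xi_c(x)}$, and the remaining imaginary part of the exponent produces the stated oscillatory phase $-(\log c/\pi)\log|\psi_\pm(x)|$. Finally, for the local expansion \eqref{D-0} at $x_0$, I Taylor-expand the numerator of $\psi(z)$ around $z = x_0$: from above, the numerator tends to a nonzero constant identified with $2(1-x_0^2)$, so $\psi(z)\sim 2(1-x_0^2)/(z-x_0)$ and $\log\psi(z) = \log[2(1-x_0^2)] - \log(z-x_0) + o(1)$; multiplying by $-\lambda$ and exponentiating yields $D(z,\Xi_c)\sim c\cdot((z-x_0)/[2(1-x_0^2)])^\lambda\cdot(1+o(1))$, which coincides with the $+$ case of \eqref{D-0} after substituting $\lambda = i(\log c)/\pi$. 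The $-$ case from below follows by the analogous expansion on the other sheet, where $\psi(z)$ has a simple zero rather than a simple pole at $x_0$, flipping the sign of the exponent and yielding the complex-conjugate statement symmetric in $\pm\Im z > 0$.
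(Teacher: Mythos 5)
Your overall strategy --- Plemelj--Sokhotski for the $D(\cdot,h)$ factor, boundary values of the principal power branches for $D(\cdot,w_{1,\gamma})$, and argument bookkeeping plus a local expansion at $x_0$ for $D(\cdot,\Xi_c)$ --- is exactly the direct verification that the paper delegates to \cite[Lemma 7]{FMS}, and your derivation of \eqref{boundaryValuesFinal} is complete and correct. The weak point is in the jump factor. Write $q(z)$ for the argument of the logarithm in \eqref{SzegoPartial2}, read --- as your own formulas require --- with $\sqrt{(z^2-1)(1-x_0^2)}$ in place of $\sqrt{z^{2}-1}$: as literally printed, \eqref{SzegoPartial2} yields neither $|q_+q_-|=1$ nor your constant $2(1-x_0^2)$, so this silent correction of a typo should be made explicit. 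You obtain $\arg q_+\to-\pi$ on $(-1,x_0)$ by composing ``$\arg(\text{numerator})=0$'' with ``$\arg(z-x_0)\to\pi$''. That recipe happens to give the right answer from above, but applied from below it gives $\arg q_-\to 0-(-\pi)=+\pi$, hence $|D_-|=c\,e^{+\log c}=c^{2}$ on $(-1,x_0)$, contradicting both the claimed modulus $\sqrt{\Xi_c(x)}=1$ and the identity $D_+D_-=\Xi_c$ that the Szeg\H{o} function must satisfy. The correct value is $\arg q_\pm\to-\pi$ from \emph{both} sides: the $\OO(\Im z)$ imaginary corrections to numerator and denominator push the naive sum of principal arguments past $+\pi$, where it wraps around to $-\pi$, and your sketch does not detect this.

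The clean repair, which also justifies that the principal branch of $\log q$ is analytic on all of $\C\setminus[-1,1]$ in the first place, is to substitute $w=\varphi(z)$ and observe that
\[
q(z)=-e^{i\theta_0}\,\frac{w-e^{-i\theta_0}}{w-e^{i\theta_0}},\qquad \theta_0=\arccos x_0,
\]
a M\"obius map carrying $\{|w|>1\}$ onto the open lower half-plane (it sends the unit circle to $\R\cup\{\infty\}$ and $w=\infty$ to $-e^{i\theta_0}$). Hence $\arg q\in(-\pi,0)$ throughout $\C\setminus[-1,1]$, tending to $0$ on $(x_0,1)$ and to $-\pi$ on $(-1,x_0)$ from either side; your modulus and phase computations then go through on both banks, with $|q_-|=1/|q_+|$ converting the lower-side phase into the form stated in the lemma. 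The same factorization makes \eqref{D-0} immediate: $q$ has a simple pole at $w=e^{i\theta_0}=\varphi_+(x_0)$ and a simple zero at $w=e^{-i\theta_0}=\varphi_-(x_0)$, each linear in $z-x_0$ since $z-x_0=\tfrac{1}{2w}(w-e^{i\theta_0})(w-e^{-i\theta_0})$. In the $z$ variable the lower-side statement is that the numerator of $q$ has a \emph{double} zero at $x_0$ (its first derivative also vanishes there from below) --- another fact your proposal asserts via ``the analogous expansion on the other sheet'' rather than checks; a naive Taylor expansion seeing only a simple zero of the numerator would give $q\to\mathrm{const}$ instead of the required simple zero.
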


The proof of this lemma is similar to \cite[Lemma 7]{FMS}, up to the difference that our jump here takes place at a generic point $x_0$, and that $w(z)$ (see \eqref{defwanalytic}) has an extra factor which makes $w(z)$ non-analytic across $\Sigma_5\cup\Sigma_6$.

The main purpose for constructing $\NN$ is that it solves the ``stripped" RH problem, obtained from the RH problem for $\mathbf S$ by ignoring all jumps asymptotically close to identity. Unfortunately, this property of $\NN$ is not uniform on the whole plane: the jumps of $\NN$ and $\mathbf S$ are no longer close in the neighborhoods of $\pm1$ and $x_0$. The analysis at these points requires a separate treatment, called local analysis, that we perform next. The outline of this analysis at each point is the following: take a small disc centered at the point and build a matrix-valued function (local parametrix) that:
\begin{enumerate}
\item[(i)] matches exactly the jumps of $\mathbf S$ within the disc, and
\item[(ii)] coincides with $\NN$ on the boundary of the disc, at least to an order $o(1)$, $n\rightarrow\infty$.
\end{enumerate}

\subsection{Local parametrix}

We fix a $\delta>0$ small enough such that discs $U_{\zeta}\isdef\left\{z\in \C: |x-\zeta|<\delta\right\},\, \zeta\in\left\{-1, x_0, 1 \right\}$ are mutually disjoint and lie in the domain of analyticity of the function $h$.
We skip the details of construction of the local parametrices $\mathbf{P}_{\pm 1}$ at $z=\pm1$ and refer the reader to \cite{MR2087231}.

For the local parametrix at the jump we need to build a $2\times 2$ matrix-valued function $\mathbf P_{x_0} \isdef \mathbf P_{0}$ in $U_{x_0}\setminus \Sigma$ that satisfies the following conditions:
\begin{enumerate}
\item[(P$_0$1)] $\mathbf P_{0}$ is holomorphic in $U_{x_0}\backslash\Sigma$ and continuous up to the boundary.

\item[(P$_0$2)] $\mathbf P_{0}$ satisfies the following jump relations:
\begin{align*}
\mathbf P_{0+}(z)  &  =\mathbf P_{0-}(z)\begin{pmatrix}
1 & 0\\
\frac{1}{w_{c,\gamma}\left(  z\right)  }\, \varphi\left(  z\right) ^{-2n} & 1
\end{pmatrix}, \quad \text{for }z\in U_{x_0}\cap\left( \bigcup_{i=1}^4\Sigma_i\right) \setminus \{ x_0 \} ;\\
\mathbf P_{0+}(x)  &  =\mathbf P_{0-}(x)\begin{pmatrix}
0 & w_{c,\gamma} \left(  z\right) \\
-\frac{1}{w_{c,\gamma}\left(  z\right)  } & 0
\end{pmatrix}, \quad \text{for }z\in U_{x_0}\cap\left( (-1,x_0)\cup (x_0,1) \right) .
\end{align*}

\item[(P$_0$3)] $\mathbf {P}_{0}(z)\mathbf {N}^{-1}\left(  z\right)  =\mathbf {I}+\OO\left(  1 / n \right)$, as $n\rightarrow\infty$,
uniformly for $z\in\partial U_{x_0}\backslash\Sigma\text{.}$%

\item[(P$_0$4)] $\mathbf P_{0}$ has the same behavior than $\mathbf S$ as $z\rightarrow x_{0} $,  $z\in
U_{x_0}\backslash\Sigma$.
\end{enumerate}

Following a standard procedure, we obtain the solution of this problem in two steps, getting first a matrix $\mathbf P^{\left(  1\right)  }$ that satisfies conditions (P$_0$1, P$_0$2, P$_0$4). After that, using an additional freedom in the construction, we take care of the matching condition (P$_0$3).

We define an auxiliary function $W$, holomorphic in $\widetilde U \setminus \R$. In the next formula we understand by $\left(  h\left(  z\right)  \left(  1-z\right)  ^{\alpha}\left(  1+z\right) ^{\beta}\left(  z-x_{0}\right)  ^{\gamma}c\right)  ^{1/2}$ the holomorphic branch of this function in $U\setminus ((-\infty, x_{0}]\cup [1,+\infty))$, positive on $(x_{0},1)$. Analogously, $\left(  h\left(  z\right)  \left(  1-z\right)  ^{\alpha}\left(  1+z\right)
^{\beta}\left(  x_{0}-z\right)  ^{\gamma}c\right)  ^{1/2}$ stands for  the holomorphic branch in $U\setminus ((-\infty, -1]\cup [x_{0},+\infty))$, positive on $(-1,x_{0})$. With this convention we set
\begin{equation}\label{Def-W}
W\left(  z\right)  =\begin{cases}
\left(  h\left(  z\right)  \left(  1-z\right)  ^{\alpha}\left(  1+z\right)
^{\beta}\left(  z-x_{0}\right)  ^{\gamma}c\right)  ^{1/2}, &
 z\in Q^{L}_{+}\cup Q^{L}_{-},\\
\left(  h\left(  z\right)  \left(  1-z\right)  ^{\alpha}\left(  1+z\right)
^{\beta}\left(  x_{0}-z\right)  ^{\gamma}c\right)  ^{1/2}, &
 z\in Q^{R}_{+}\cup Q^{R}_{-}.%
\end{cases}
\end{equation}
It is easy to see from \eqref{defwanalytic}--\eqref{defwcanalytic} that 
\begin{equation*}
W^{2}\left(
z\right)  = 
\begin{cases}
w_{c,\gamma}\left(  z\right)  e^{-\gamma\pi i} c^{-1}, &  z\in Q_+^R,\\
w_{c,\gamma}\left(  z\right)  e^{\gamma\pi i} c, & z\in Q_+^L,\\
w_{c,\gamma}\left(  z\right)  e^{-\gamma\pi i} c, & z\in Q_-^L,,\\
w_{c,\gamma}\left(  z\right)  e^{\gamma\pi i} c^{-1}, & z\in Q_-^R.%
\end{cases}
\end{equation*}
This shows that $W$ satisfies the following jump relations:
\begin{equation*}
W_{+}\left(  x\right)   = 
\begin{cases}
W_{-}\left(  x\right)  e^{i\gamma\pi}, &  -1<x<x_{0},\\
W_{-}\left(  x\right)  e^{-i\gamma\pi}, &  x_{0}<x<1,
\end{cases}
\end{equation*}
and
\begin{equation}\label{W2}
W_{+}\left(  z\right)    =e^{i \gamma\pi/ 2} \, W_{-}\left(  z\right) ,\quad  z\in\Sigma_{5}\cup\Sigma_{6}.
\end{equation}
Moreover, 
\begin{equation}\label{W4}
W_{+}\left(  x\right)
= 
\begin{cases}
\sqrt{w_{c,\gamma}\left(  x\right) c } \, e^{i\frac{\gamma\pi}{2}} = \sqrt{w_{1,\gamma}\left(  x\right) c }\, e^{i\frac{\gamma\pi}{2}}, &  -1<x<x_{0},\\
\sqrt{w_{c,\gamma}\left(  x\right) c^{-1} } \, e^{-i\frac{\gamma\pi}{2}} = \sqrt{w_{1,\gamma}\left(  x\right) c }\, e^{-i\frac{\gamma\pi}{2}}, &  x_{0}<x<1,
\end{cases}
\end{equation}
and
\begin{equation}\label{W1}
W_{+}\left(  x\right)  W_{-}\left(  x\right)  = 
\begin{cases}
w_{c,\gamma}\left(  x\right)  c, &  -1<x<x_{0},\\
w_{c,\gamma}\left(  x\right)  c^{-1}, &  x_{0}<x<1.
\end{cases}
\end{equation}

We construct the matrix function $\mathbf P_{0}$ in the following form:
\begin{equation}
\mathbf P_{0}\left(  z\right)  =\mathbf E_{n}\left(  z\right) \mathbf   P^{\left(  1\right)  }\left(
z\right)  W\left(  z\right)  ^{-\sigma_{3}}\varphi\left(  z\right)
^{-n\sigma_{3}}, \label{Def-P}%
\end{equation}
where $\mathbf E_n$ is an analytic matrix-valued function in $U_{x_0}$ (to be determined). Matrix $\mathbf P^{\left(  1\right)  }$ is analytic in $U_{x_0}\setminus \Sigma$. Denote by%
\begin{align}\label{Psi_k}
J_1 = \begin{pmatrix}
0 & c \\
-1/c & 0
\end{pmatrix},\quad
J_2 = \begin{pmatrix}
1 & 0 \\
e^{-\gamma \pi i}c^{-1} & 1
\end{pmatrix},\quad
J_3 = J_7 = \begin{pmatrix}
e^{i\gamma \pi /2} & 0 \\
0 & e^{-i\gamma \pi /2}
\end{pmatrix},\\ \label{Psi_k-2}
J_4 = \begin{pmatrix}
1 & 0 \\
e^{\gamma \pi i}c & 1
\end{pmatrix},\quad
J_5 = \begin{pmatrix}
0 & 1/c \\
-c & 0
\end{pmatrix},\quad
J_6 = \begin{pmatrix}
1 & 0 \\
e^{-\gamma \pi i}c & 1
\end{pmatrix}.\quad \quad \quad
\end{align}%
Using the properties of $W$ and $\varphi$ it is easy to show that
\begin{equation}\label{jumpsForP1_1}
\mathbf P_{+}^{\left(  1\right)  }\left(  x\right)  =\mathbf P_{-}^{\left(  1\right)  }\left(
x\right)
\begin{cases}
J_5, & x\in (x_0-\delta, x_0), \\[0mm]
J_1, & x\in (x_0, x_0+ \delta),
\end{cases}
\end{equation}%
and
\begin{equation}\label{jumpsForP1_2}
\mathbf P_{+}^{\left(  1\right)  }\left(  z\right)  =\mathbf P_{-}^{\left(  1\right)  }\left(
z\right) \begin{cases}
J_4, & z\in \Sigma_{1} \cap U_{x_0}\setminus \{x_0\}, \\[0mm]
J_6, & z\in  \Sigma_{2}\cap U_{x_0}\setminus \{x_0\}, \\[0mm]
J_2,&z\in \Sigma_{3} \cap U_{x_0}\setminus \{x_0\}, \\[0mm]
J_8 ,&z\in \Sigma_{4} \cap U_{x_0}\setminus \{x_0\},
\end{cases}
\end{equation}%
and, as $W$ has a jump on $\Sigma_5\cup\Sigma_6$, by \eqref{W2}, we have two additional jumps on $\Sigma_5\cup\Sigma_6$: %
\begin{equation}\label{jumpsForP1_2-2}
\mathbf P_{+}^{\left(  1\right)  }\left(  z\right)  =\mathbf P_{-}^{\left(  1\right)  }\left(
z\right)
\begin{cases}
J_3,&z\in \Sigma_{5} \cap U_{x_0}\setminus \{x_0\}, \\[0mm]
J_7,&z\in \Sigma_{6} \cap U_{x_0}\setminus \{x_0\}.
\end{cases}
\end{equation}%

Taking into account that $W\left(  z\right)  =\OO\left( |z-x_0|^{\gamma/2}\right)  $ and $\varphi\left(  z\right)
=\OO\left(  1\right)  $ as $z\rightarrow x_0$, we conclude also from (P$_0$4) that
 $\mathbf P^{\left(  1\right)  }$ has the following behavior at $x_0$: as $z\rightarrow x_0$,  $z\in\mathbb{C}\backslash \left(\Sigma \cup \Sigma_5 \cup \Sigma_6\right)$,
\begin{itemize}
\item for $\gamma<0$:
\begin{equation}\label{jumpsForP1_3}
\mathbf{P}^{\left(  1\right)  }(z)=
\OO\left(
\begin{array}
[c]{cc}%
\left\vert z-x_{0}\right\vert ^{\gamma/2} & \left\vert z-x_{0}\right\vert
^{\gamma/2}\\
\left\vert z-x_{0}\right\vert ^{\gamma/2} & \left\vert z-x_{0}\right\vert
^{\gamma/2}%
\end{array}
\right) ,%
\end{equation}

\item for $\gamma=0$%
\begin{equation}\label{jumpsForP1_3-2}
\mathbf{P}^{\left(  1\right)  }(z)=\left\{
\begin{array}
[c]{ll}%
\OO\left(
\begin{array}
[c]{cc}%
\log\left\vert z-x_{0}\right\vert  & \log\left\vert z-x_{0}\right\vert \\
\log\left\vert z-x_{0}\right\vert  & \log\left\vert z-x_{0}\right\vert
\end{array}
\right),  & \text{ from inside the lens,}\\
\OO\left(
\begin{array}
[c]{ll}%
1 & \log\left\vert z-x_{0}\right\vert \\
1 & \log\left\vert z-x_{0}\right\vert
\end{array}
\right),  & \text{ from outside the lens,}%
\end{array}
\right.
\end{equation}

\item for $\gamma>0:$%
\begin{equation}\label{jumpsForP1_3-3}
\mathbf{P}^{(1)}(z)=\left\{
\begin{array}
[c]{ll}%
\OO\left(
\begin{array}
[c]{cc}%
\left\vert z-x_{0}\right\vert ^{\gamma/2} & \left\vert z-x_{0}\right\vert
^{-\gamma/2}\\
\left\vert z-x_{0}\right\vert ^{\gamma/2} & \left\vert z-x_{0}\right\vert
^{-\gamma/2}%
\end{array}
\right),  & \text{ from outside the lens,}\\
\OO\left(
\begin{array}
[c]{cc}%
\left\vert z-x_{0}\right\vert ^{-\gamma/2} & \left\vert z-x_{0}\right\vert
^{-\gamma/2}\\
\left\vert z-x_{0}\right\vert ^{-\gamma/2} & \left\vert z-x_{0}\right\vert
^{-\gamma/2}%
\end{array}
\right),  & \text{ from inside the lens.}%
\end{array}
\right.
\end{equation}

\end{itemize}

In order to construct $\mathbf P^{(1)}$ we solve first an auxiliary RH problem on a set $\Gamma\isdef \bigcup_{j=1}^8 \Gamma_j$ of unbounded oriented straight lines converging at the origin, like in Fig.~\ref{fig:psicontours}.
\begin{figure}[htb]
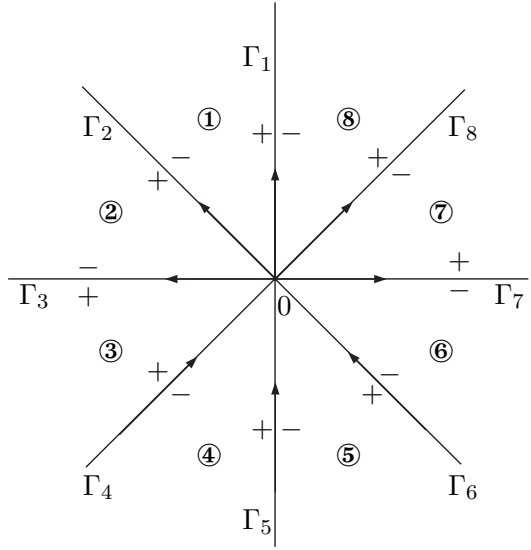

\centering \begin{overpic}[scale=0.7]%
{contoursGama}%
    \put(34,75){  \large \ding{172}  }
 \put(17,59){  \large  \ding{173}  }
 \put(34,17){  \large  \ding{175}  }
    \put(58,75){  \large \ding{179}  }
 \put(74,35){  \large  \ding{177}  }
 \put(58,17){  \large  \ding{176}  }
 \put(74,59){  \large \ding{178}  }
 \put(17,35){  \large \ding{174}  }
\put(15,45){$\small +$}
\put(15,50){$\small -$}
\put(79,51){$\small +$}
\put(79,46){$\small -$}
\put(27,32){$\small +$}
\put(31,28){$\small -$}
\put(65,69){$\small +$}
\put(69,66){$\small -$}
\put(45,73){$\small +$}
\put(50,73){$\small -$}
\put(45,22){$\small +$}
\put(50,22){$\small -$}
\put(27,65){$\small +$}
\put(31,69){$\small -$}
\put(64,28){$\small +$}
\put(67,31.5){$\small -$}
\put(49.5,43){$\small 0$}
\put(43.5,86){$\Gamma_1$}
\put(43.5,6){$\Gamma_5$}
\put(16,12){$\Gamma_4$}
\put(5,45){$\Gamma_3$}
\put(16,74){$\Gamma_2$}
\put(79,12){$\Gamma_6$}
\put(87,45){$\Gamma_7$}
\put(79,74){$\Gamma_8$}
\end{overpic}
\caption{Auxiliary contours $\Gamma$.}
\label{fig:psicontours}
\end{figure}
More precisely,
\begin{align*}
 \Gamma_1 &=\left\{t\,e^{i\pi /2}:\, t>0 \right\}, \, \Gamma_2 =\left\{t\,e^{3 i\pi /4}:\, t>0 \right\}, \, \Gamma_3 =\left\{-t:\, t>0 \right\}, \, \Gamma_4 =\left\{t\,e^{5 i\pi /4}:\, t>0 \right\},\\
 \Gamma_5 &=\left\{t\,e^{3 i\pi /2}:\, t>0 \right\}, \, \Gamma_6 =\left\{t\,e^{- i\pi t/4}:\, t>0 \right\}, \, \Gamma_7 =\left\{t:\, t>0 \right\}, \, \Gamma_8 =\left\{e^{  i\pi t/4}:\, t>0 \right\}.
\end{align*}
These lines split the plane into 8 sectors, enumerated anti-clockwise from \ding{172} to \ding{179} as in Fig.~\ref{fig:psicontours}.

We look for a $2\times2$ matrix valued function
$\mathbf \Psi\left(  z\right)   $, satisfying the following conditions:
\begin{enumerate}
\item[($\Psi$1)] $\mathbf \Psi$ is analytic in $\mathbb{C}\backslash\Gamma$.

\item[($\Psi$2)] for $k=1, \dots, 8$, $\mathbf \Psi$ satisfies the jump relation $\mathbf \Psi_{+}(\zeta)     =\mathbf \Psi_{-}(\zeta) J_k$ on $\Gamma_k$, with $J_k$ given by \eqref{Psi_k} and \eqref{Psi_k-2}.

\item[($\Psi$3)] the behavior of $\mathbf \Psi$ as $\zeta\rightarrow0$ is obtained from that of $\PP^{(1)}$ at $x_{0}$ by replacing $\left(z-x_0 \right)$ with $\zeta$. Now  the region ``inside lens" correspond to  $\text{\ding{172}}\cup \text{\ding{175}}\cup \text{\ding{176}}\cup \text{\ding{179}}$ and the region ``outside lens" corresponds to  $\text{\ding{173}}\cup \text{\ding{174}}\cup \text{\ding{177}}\cup \text{\ding{178}}$.%
\end{enumerate}%
We construct $\mathbf \Psi$ explicitly using the confluent hypergeometric
functions
\[
\phi\left(  a,\gamma+1;\zeta\right)  \isdef {_{1}F_{1}}\left(
a;\gamma+1;\zeta\right)  \text{ \ \ and \ }\psi\left(  a,\gamma+1;\zeta
\right)  \isdef z^{-a} {_{2}F_{0}}\left(  a,a-\gamma;-;-1/\zeta\right)
,
\]
that are solutions of the confluent hypergeometric equation $\zeta
w^{\prime\prime}+\left(  \gamma+1-\zeta\right)  w^{\prime}-aw=0$, see
\cite[formula (13.1.1)]{abramowitz/stegun:1972}. Namely, let
\begin{equation}
G\left(  a, \gamma; \zeta\right)  \isdef\zeta^{\gamma/2}\phi\left(
a,\gamma+1;\zeta\right)  e^{-\zeta/2}, \quad H\left(  a, \gamma; \zeta\right)
\isdef\zeta^{\gamma/2}\psi\left(  a,\gamma+1;\zeta\right)
e^{-\zeta/2}, \label{Def-G-H}%
\end{equation}
they form a basis of solutions of the confluent equation (see e.g. \cite[formula (13.1.35)]{abramowitz/stegun:1972})%
\begin{equation}
4\zeta^{2}w^{\prime\prime}+4\zeta w^{\prime}+\left[  -\gamma^{2}+2\zeta\left(
\gamma+1-2a\right)  -\zeta^{2}\right]  w=0. \label{eq-dif-confl}%
\end{equation}
We can relate $G$ and $H$ with the Whittaker functions: $G(a,\gamma;z)=M_{\kappa,\mu}(z)/\sqrt z$ and $H(a,\gamma;z)=W_{\kappa,\mu}(z)/\sqrt z$ with $\mu=\gamma/2$ and $\kappa=1/2+\mu-a$ (see \cite[formula (13.1.32)]{abramowitz/stegun:1972}). 

In general, $G\left(  a, \gamma; \zeta \right)  $  and $H\left(  a, \gamma; \zeta \right)  $ from (\ref{eq-dif-confl}) are multi-valued, and we take its principal branch in $-\frac{\pi
}{2}<\arg\left(  \zeta\right)  <\frac{3\pi}{2}$. For these values of $\zeta$ we define%
\[
\widehat{\mathbf \Psi}\left(  \zeta\right)  \isdef \left(
\begin{array}
[c]{cc}%
\frac{\Gamma\left(  1-\lambda+\frac{\gamma}{2}\right)  }{\Gamma\left(
\gamma+1\right)  }G\left(  \lambda+\frac{\gamma}{2},\gamma;\zeta\right)  & -H\left(
\lambda+\frac{\gamma}{2},\gamma;\zeta\right) \\
\frac{\Gamma\left(  1+\lambda+\frac{\gamma}{2}\right)  }{\Gamma\left(
\gamma+1\right)  }G\left(  1+\lambda+\frac{\gamma}{2},\gamma;\zeta\right)  &
\frac{\Gamma\left(  1+\lambda+\frac{\gamma}{2}\right)  }{\Gamma\left(
\frac{\gamma}{2}-\lambda\right)  }H\left(  1+\lambda+\frac{\gamma}{2}%
,\gamma;\zeta\right)
\end{array}
\right)  e^{\frac{\gamma\pi i}{4}\sigma_{3}}.
\]
By ($\Psi_{2}$), if we set
\begin{equation}
\mathbf \Psi\left(  \zeta\right)  \isdef \left\{
\begin{array}
[c]{ll}%
\widehat{\mathbf \Psi}\left(  \zeta\right)  J_{8}J_{1}, & \text{for }\zeta
\in\text{\ding{172}};\\
\widehat{\mathbf \Psi}\left(  \zeta\right)  J_{8}J_{1}J_{2}, & \text{for }\zeta
\in\text{\ding{173}};\\
\widehat{\mathbf \Psi}\left(  \zeta\right)  J_{8}J_{1}J_{2}J_{3}, & \text{for }%
\zeta\in\text{\ding{174}};\\
\widehat{\mathbf \Psi}\left(  \zeta\right)  J_{8}J_{1}J_{2}J_{3}J_{4}^{-1}, &
\text{for }\zeta\in\text{\ding{175}};\\
\widehat{\mathbf \Psi}\left(  \zeta\right)  J_{7}^{-1}J_{6}, & \text{for }\zeta
\in\text{\ding{176}};\\
\widehat{\mathbf \Psi}\left(  \zeta\right)  J_{7}^{-1}, & \text{for }\zeta
\in\text{\ding{177}};\\
\widehat{\mathbf \Psi}\left(  \zeta\right)  , & \text{for }\zeta\in\text{\ding{178}};\\
\widehat{\mathbf \Psi}\left(  \zeta\right)  J_{8}, & \text{for }\zeta\in
\text{\ding{179}}.
\end{array}
\right.  \label{sol Psi0}%
\end{equation}
then $\mathbf \Psi$ has the jumps across $\Gamma$ specified in ($\Psi_{2}$).
Explicitly, $\mathbf \Psi\left(  \zeta\right)  =$%
\begin{equation}
\left(
\begin{array}
[c]{cc}%
c^{-1}H\left(  \lambda+\frac{\gamma}{2},\gamma;\zeta\right) &
-\frac{\Gamma\left(  1-\lambda+\frac{\gamma}{2}\right)  }{\Gamma\left(
\frac{\gamma}{2}+\lambda\right)  }H\left(  1-\lambda+\frac{\gamma}{2},\gamma;\zeta
e^{-\pi i}\right) \\
-c^{-1}\frac{\Gamma\left(  1+\lambda+\frac{\gamma}{2}\right)  }{\Gamma\left(
\frac{\gamma}{2}-\lambda\right)  }H\left(  1+\lambda+\frac{\gamma}{2}%
,\gamma;\zeta\right)   & H\left(  \frac{\gamma}{2}-\lambda,\gamma;\zeta
e^{-\pi i}\right)
\end{array}
\right)  e^{-\frac{\gamma\pi i}{4}\sigma_{3}},~\zeta\in\text{\ding{172}},
\label{sol Psi1}%
\end{equation}%
\begin{equation}
\left(
\begin{array}
[c]{cc}%
\frac{\Gamma\left(  1-\lambda+\frac{\gamma}{2}\right)  }{\Gamma\left(
\gamma+1\right)  }G\left(  \lambda+\frac{\gamma}{2},\gamma;\zeta\right)
e^{-\frac{\gamma\pi i}{2}} & -\frac{\Gamma\left(  1-\lambda+\frac{\gamma}%
{2}\right)  }{\Gamma\left(  \frac{\gamma}{2}+\lambda\right)  }H\left(
1-\lambda+\frac{\gamma}{2},\gamma;\zeta e^{-\pi i}\right) \\
\frac{\Gamma\left(  1+\lambda+\frac{\gamma}{2}\right)  }{\Gamma\left(
\gamma+1\right)  }G\left(  1+\lambda+\frac{\gamma}{2},\gamma;\zeta\right)
e^{-\frac{\gamma\pi i}{2}} & H\left(  \frac{\gamma}{2}-\lambda,\gamma;\zeta e^{-\pi
i}\right)
\end{array}
\right)  e^{-\frac{\gamma\pi i}{4}\sigma_{3}},~\zeta\in\text{\ding{173}},
\label{sol Psi2}%
\end{equation}%
\begin{equation}
\left(
\begin{array}
[c]{cc}%
\frac{\Gamma\left(  1-\lambda+\frac{\gamma}{2}\right)  }{\Gamma\left(
\gamma+1\right)  }G\left(  \lambda+\frac{\gamma}{2},\gamma;\zeta\right)  &
-\frac{\Gamma\left(  1-\lambda+\frac{\gamma}{2}\right)  }{\Gamma\left(
\frac{\gamma}{2}+\lambda\right)  }H\left(  1-\lambda+\frac{\gamma}{2},\gamma;\zeta
e^{-\pi i}\right)  e^{-\frac{\gamma\pi i}{2}}\\
\frac{\Gamma\left(  1+\lambda+\frac{\gamma}{2}\right)  }{\Gamma\left(
\gamma+1\right)  }G\left(  1+\lambda+\frac{\gamma}{2},\gamma;\zeta\right)  & H\left(
\frac{\gamma}{2}-\lambda,\gamma;\zeta e^{-\pi i}\right)  e^{-\frac{\gamma\pi i}{2}}%
\end{array}
\right)  e^{-\frac{\gamma\pi i}{4}\sigma_{3}},~\zeta\in\text{\ding{174}},
\label{sol Psi3}%
\end{equation}
\begin{equation}
\left(
\begin{array}
[c]{cc}%
c\,H\left(  \lambda+\frac{\gamma}{2},\gamma;\zeta e^{-2\pi i}\right)
& -\frac{\Gamma\left(  1-\lambda+\frac{\gamma}{2}\right)  }{\Gamma\left(
\frac{\gamma}{2}+\lambda\right)  }H\left(  1-\lambda+\frac{\gamma}{2},\gamma;\zeta
e^{-\pi i}\right) \\
-c\frac{\Gamma\left(  1+\lambda+\frac{\gamma}{2}\right)  }{\Gamma\left(
\frac{\gamma}{2}-\lambda\right)  }H\left(  1+\lambda+\frac{\gamma}{2},\gamma;\zeta
e^{-2\pi i}\right)  & H\left(  \frac{\gamma}{2}%
-\lambda,\gamma;\zeta e^{-\pi i}\right)
\end{array}
\right)  e^{\frac{\gamma\pi i}{4}\sigma_{3}},~\zeta\in\text{\ding{175}},
\label{sol Psi4}%
\end{equation}%
\begin{equation}
\left(
\begin{array}
[c]{cc}%
-\frac{\Gamma\left(  1-\lambda+\frac{\gamma}{2}\right)  }{\Gamma\left(
\lambda+\frac{\gamma}{2}\right)  }H\left(  1-\lambda+\frac{\gamma}{2},\gamma;\zeta
e^{\pi i}\right)  e^{-\lambda\pi i} & -H\left(  \lambda+\frac{\gamma}{2}%
,\gamma;\zeta\right) \\
H\left(  \frac{\gamma}{2}-\lambda,\gamma;\zeta e^{\pi i}\right)  e^{-\lambda\pi i} &
\frac{\Gamma\left(  1+\lambda+\frac{\gamma}{2}\right)  }{\Gamma\left(
\frac{\gamma}{2}-\lambda\right)  }H\left(  1+\lambda+\frac{\gamma}{2}%
,\gamma;\zeta\right)
\end{array}
\right)  e^{-\frac{\gamma\pi i}{4}\sigma_{3}},~\zeta\in\text{\ding{176}},
\label{sol Psi5}%
\end{equation}%
\begin{equation}
\left(
\begin{array}
[c]{cc}%
\frac{\Gamma\left(  1-\lambda+\frac{\gamma}{2}\right)  }{\Gamma\left(
\gamma+1\right)  }G\left(  \lambda+\frac{\gamma}{2},\gamma;\zeta\right)  & -H\left(
\lambda+\frac{\gamma}{2},\gamma;\zeta\right) \\
\frac{\Gamma\left(  1+\lambda+\frac{\gamma}{2}\right)  }{\Gamma\left(
\gamma+1\right)  }G\left(  1+\lambda+\frac{\gamma}{2},\gamma;\zeta\right)  &
\frac{\Gamma\left(  1+\lambda+\frac{\gamma}{2}\right)  }{\Gamma\left(
\frac{\gamma}{2}-\lambda\right)  }H\left(  1+\lambda+\frac{\gamma}{2}%
,\gamma;\zeta\right)
\end{array}
\right)  e^{-\frac{\gamma\pi i}{4}\sigma_{3}},~\zeta\in\text{\ding{177}},
\label{sol Psi6}%
\end{equation}%
\begin{equation}
\left(
\begin{array}
[c]{cc}%
\frac{\Gamma\left(  1-\lambda+\frac{\gamma}{2}\right)  }{\Gamma\left(
\gamma+1\right)  }G\left(  \lambda+\frac{\gamma}{2},\gamma;\zeta\right)  & -H\left(
\lambda+\frac{\gamma}{2},\gamma;\zeta\right) \\
\frac{\Gamma\left(  1+\lambda+\frac{\gamma}{2}\right)  }{\Gamma\left(
\gamma+1\right)  }G\left(  1+\lambda+\frac{\gamma}{2},\gamma;\zeta\right)  &
\frac{\Gamma\left(  1+\lambda+\frac{\gamma}{2}\right)  }{\Gamma\left(
\frac{\gamma}{2}-\lambda\right)  }H\left(  1+\lambda+\frac{\gamma}{2}%
,\gamma;\zeta\right)
\end{array}
\right)  e^{\frac{\gamma\pi i}{4}\sigma_{3}},~\zeta\in\text{\ding{178}},
\label{sol Psi7}%
\end{equation}%
\begin{equation}
\left(
\begin{array}
[c]{cc}%
-c^{-1}\frac{\Gamma\left(  1-\lambda+\frac{\gamma}{2}\right)  }{\Gamma\left(
\frac{\gamma}{2}+\lambda\right)  }H\left(  1-\lambda+\frac{\gamma}{2},\gamma;\zeta
e^{-\pi i}\right) & -H\left(  \lambda+\frac{\gamma}{2}%
,\gamma;\zeta\right) \\
c^{-1}\,H\left(  \frac{\gamma}{2}-\lambda,\gamma;\zeta e^{-\pi i}\right) &
\frac{\Gamma\left(  1+\lambda+\frac{\gamma}{2}\right)  }{\Gamma\left(
\frac{\gamma}{2}-\lambda\right)  }H\left(  1+\lambda+\frac{\gamma}{2}%
,\gamma;\zeta\right)
\end{array}
\right)  e^{\frac{\gamma\pi i}{4}\sigma_{3}},~\zeta\in\text{\ding{179}}.
\label{sol Psi8}%
\end{equation}

\begin{proposition}
The solution of the RH problem $(\Psi1)$, $(\Psi2)$, $(\Psi3)$ is given by \eqref{sol Psi0} and $\det \mathbf \Psi (z)=1$, for $z\in\C\setminus\Gamma$.
\end{proposition}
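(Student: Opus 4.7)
The plan is to verify directly that $\mathbf \Psi$, defined piecewise by \eqref{sol Psi0}, satisfies the three conditions $(\Psi1)$--$(\Psi3)$, and then to show $\det \mathbf \Psi\equiv 1$. The two essential ingredients are the analyticity of the confluent hypergeometric functions $\phi(a,\gamma+1;\cdot)$ and $\psi(a,\gamma+1;\cdot)$ in the principal cut plane $-\pi/2<\arg\zeta<3\pi/2$, together with Kummer's connection formulas relating $\psi(a,\gamma+1;\zeta e^{\pm i\pi})$, $\psi(a,\gamma+1;\zeta)$, and $\phi(a,\gamma+1;\zeta)$.

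I would begin with $(\Psi2)$. On every arc $\Gamma_k$ that lies strictly inside the principal domain of $\widehat{\mathbf \Psi}$, the prefactor $\widehat{\mathbf \Psi}$ is continuous, so the jump $\mathbf \Psi_-^{-1}\mathbf \Psi_+$ reduces to the product $M_-^{-1}M_+$ of the constant right multipliers attached to the two adjacent sectors in \eqref{sol Psi0}; these products telescope to the required $J_k$ by construction of the indexing. The non-trivial crossings are those arcs $\Gamma_3$, $\Gamma_5$, $\Gamma_7$ across which the adjacent sectors of \eqref{sol Psi1}--\eqref{sol Psi8} are written using different rotations of the argument (e.g.\ $\zeta$ versus $\zeta e^{-i\pi}$ in \eqref{sol Psi4}--\eqref{sol Psi5}). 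There I would apply Kummer's connection relations, which express $G(a,\gamma;\zeta)$ and $H(a,\gamma;\zeta e^{\mp i\pi})$ as linear combinations of $H(a,\gamma;\zeta)$ and $H(1-a,\gamma;\zeta e^{\mp i\pi})$ with explicit Gamma-function coefficients, and check that these identities produce exactly $J_1$, $J_3$, $J_5$, $J_7$. Analyticity $(\Psi1)$ on $\C\setminus\Gamma$ is then automatic from the piecewise construction.

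Condition $(\Psi3)$ follows from the standard small-$\zeta$ behavior on the principal branch: $G(a,\gamma;\zeta)=\OO(\zeta^{\gamma/2})$ for every $\gamma>-1$, while $H(a,\gamma;\zeta)=\OO(\zeta^{-\gamma/2})$ for $\gamma>0$, is logarithmic for $\gamma=0$ (with a subleading $\OO(\zeta^{\gamma/2})$ piece), and $\OO(\zeta^{\gamma/2})$ for $-1<\gamma<0$. Substituting these estimates into \eqref{sol Psi1}--\eqref{sol Psi8} and keeping track of which column carries the $H$-type entries reproduces \eqref{jumpsForP1_3}--\eqref{jumpsForP1_3-3} with the correct distinction between the outer sectors \ding{173}, \ding{174}, \ding{177}, \ding{178} and the inner sectors \ding{172}, \ding{175}, \ding{176}, \ding{179}.

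For $\det\mathbf \Psi\equiv 1$, note that $\det J_k=1$ for every $k=1,\ldots,8$, so $\det\mathbf \Psi$ extends analytically across $\Gamma$, and by $(\Psi3)$ the origin is a removable singularity; hence $\det\mathbf \Psi$ is entire, and it suffices to evaluate it in one sector, say \ding{178}. Using the Wronskian identity for the confluent hypergeometric equation, $W\bigl(\phi(a,\gamma+1;\cdot),\psi(a,\gamma+1;\cdot)\bigr)$ is proportional to $\zeta^{-\gamma-1}e^{\zeta}\,\Gamma(\gamma+1)/\Gamma(a)$; combined with the factors $\zeta^{\gamma}e^{-\zeta}$ coming from the definitions \eqref{Def-G-H} of $G$ and $H$, the diagonal twist $e^{\gamma\pi i\sigma_3/4}$, and the Gamma-function ratios built into the rows of $\widehat{\mathbf \Psi}$, everything cancels to give $\det\widehat{\mathbf \Psi}\equiv 1$, which extends to $\mathbf \Psi$ on every sector. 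The main obstacle is not any single step but the careful bookkeeping of branches in the Kummer connection formulas across $\Gamma_3$, $\Gamma_5$, $\Gamma_7$, a mechanically intricate but elementary task given the eight-sector decomposition.
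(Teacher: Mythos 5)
Your proposal is correct and follows essentially the same route as the paper: the jumps are verified by telescoping the constant factors $J_k$ together with the Kummer connection formulas for $\psi$ across the branch cut of $\widehat{\mathbf\Psi}$ (in fact only $\Gamma_5$, at $\arg\zeta=-\pi/2$, is non-trivial in this sense, since $\widehat{\mathbf\Psi}$ is continuous across $\Gamma_3$ and $\Gamma_7$), the behavior at the origin follows from the standard small-$\zeta$ expansions of $\phi$ and $\psi$, and the determinant is obtained from the Wronskian identity for the confluent hypergeometric equation. The only cosmetic difference is your auxiliary removability argument for $\det\mathbf\Psi$, which is unnecessary (and would need care when $\gamma\ge 1$): the Wronskian computation already gives $\det\widehat{\mathbf\Psi}\equiv 1$ on the principal sector, and $\det J_k=1$ propagates this to every sector, exactly as in the paper.
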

\begin{proof}
If we take the branch cut across $\arg\zeta= -\pi/2$ oriented towards the origin (we consider $ -\pi/2 <\arg\zeta< 3\pi/4$), we have
that the matrix $\mathbf \Psi$ has on this cut the following jump (using
\eqref{def_lambda}):%
\begin{align} \label{salto}
\mathbf \Psi_{+} (\zeta) &  =\mathbf \Psi_{-}(\zeta)J_{5}, \quad \zeta\in\Gamma_5,\\
\widehat{\mathbf\Psi}_{+} (\zeta) &  =\widehat{\mathbf\Psi}_{-}(\zeta) 
\begin{pmatrix}
e^{i\pi\gamma} & -e^{-i\pi\lambda}+e^{i\pi\lambda}e^{-i\pi\gamma}\\
0 & e^{-i\pi\gamma}%
\end{pmatrix}, \quad \zeta\in\Gamma_5  \text{.}%
\end{align}
Using the following relations (see \cite[appendix: formulas (7.18), (7.30), (7.27)]{Its07b}),
\begin{equation}\label{form1}%
\phi\left(  a,b;e^{\pm2\pi i}z\right)  =\phi\left(  a,b;z\right),
\end{equation}%
\begin{equation*}
\psi\left(  a,b;e^{2\pi i}z\right)  =e^{-2i\pi a}\psi\left(  a,b;z\right)
+e^{-i\pi a}\frac{2\pi i}{\Gamma\left(  a\right)  \Gamma\left(  1+a-b\right)
}\psi\left(  b-a,1;e^{i\pi}z\right)  e^{z},%
\end{equation*}%
\begin{equation*}
\psi\left(  b-a,b;e^{i\pi}z\right)  e^{z}=\frac{-\Gamma\left(  a\right)
}{\Gamma\left(  b-a\right)  }e^{-i\pi b}\psi\left(  a,b;z\right)
+\frac{\Gamma\left(  a\right)  }{\Gamma\left(  b\right)  }e^{-i\pi\left(
b-a\right)  }\phi\left(  a,b;z\right),
\end{equation*}%
\begin{equation*}
\Gamma\left(  s\right)  \Gamma\left(  1-s\right)  =\frac{2\pi i}{e^{i\pi
s}-e^{-i\pi s}},%
\end{equation*}
and, combining the last three formulas we obtain:%
\begin{equation} \label{form2}
\psi\left(  a,b;e^{2\pi i}z\right)  =\psi\left(  a,b;z\right)  e^{-2\pi
ib}+\phi\left(  a,b;z\right)  \frac{2\pi i}{\Gamma\left(  1+a-b\right)
\Gamma\left(  b\right)  }e^{-\pi ib}.%
\end{equation}
Set
\begin{align*}
\widehat{\mathbf\Psi}_{11}\left(  \zeta\right) & =\frac{\Gamma\left(  1-\lambda
+\frac{\gamma}{2}\right)  }{\Gamma\left(  \gamma+1\right)  }\zeta^{\gamma/2}\phi\left(  \lambda+\frac{\gamma}{2},\gamma+1;\zeta\right)  e^{-\zeta/2}%
e^{i\pi\gamma/4},
\\
\widehat{\mathbf\Psi}_{12} & =-\zeta^{\gamma /2}\psi\left(  \lambda+\frac{\gamma}%
{2},\gamma+1;\zeta\right)  e^{-\zeta/2}e^{-i\pi\gamma /4}.%
\end{align*}
Then from \eqref{form1} and \eqref{form2} if follows that for $\zeta\in\Gamma_{5}$,
\begin{align*}
\left(  \widehat{\mathbf\Psi}_{11}\right)  _{+}\left(  \zeta\right)   &  =\left(  e^{2\pi
i}\zeta\right)  ^{\gamma/2}\phi\left(  \lambda+\frac{\gamma}{2}%
,\gamma+1;e^{2\pi i}\zeta\right)  e^{-\zeta/2}e^{i\pi\gamma/4}\tfrac
{\Gamma\left(  1-\lambda+\frac{\gamma}{2}\right)  }{\Gamma\left(
\gamma+1\right)  }\\
&  =e^{i\pi\gamma}\left(  \widehat{\mathbf\Psi}_{11}\right)  _{-}\left(  \zeta\right)  ,
\end{align*}%
and
\begin{align*}
\left(  \widehat{\mathbf\Psi}_{12}\right)  _{+}\left(  \zeta\right)   &  =-\left(
e^{2\pi i}\zeta\right)  ^{\gamma/2}\psi\left(  \lambda+\frac{\gamma}%
{2},\gamma+1;e^{2\pi i}\zeta\right)  e^{-\zeta/2}e^{-i\pi\gamma/4} \\
&  =\tfrac{2\pi ie^{-\pi i}e^{-i\pi\gamma/4}\Gamma\left(
\gamma+1\right)  }{\Gamma\left(  \lambda-\frac{\gamma}{2}\right)
\Gamma\left(  \gamma+1\right)  \Gamma\left(  1-\lambda+\frac{\gamma}%
{2}\right)  e^{i\pi\gamma/4}}\left(  \widehat{\mathbf\Psi}_{11}\right)
_{-}\left(  \zeta\right)  +e^{\pi i\left(  \gamma-2\gamma-2\right)  }\left(
\widehat{\mathbf\Psi}_{12}\right)  _{-}\left(  \zeta\right) \\
&  =\left[  -e^{-i\pi\lambda}+e^{i\pi\lambda}e^{-i\pi\gamma}\right]  \left(
\widehat{\mathbf\Psi}_{11}\right)  _{-}\left(  \zeta\right)  +e^{-i\pi\gamma}\left(
\widehat{\mathbf\Psi}_{12}\right)  _{-}\left(  \zeta\right)  ,
\end{align*}
in accordance with  \eqref{salto}.
Analogously, we can satisfy the second row of \eqref{salto} if we take%
\[
\widehat{\mathbf\Psi}_{21}=\frac{\Gamma\left(  1+\lambda+\frac{\gamma}{2}\right)
}{\Gamma\left(  \gamma+1\right)  }\zeta^{\gamma/2}\phi\left(  1+\lambda
+\frac{\gamma}{2},\gamma+1;\zeta\right)  e^{-\zeta/2}e^{i\pi\frac{\gamma}{4}},%
\]%
\[
\widehat{\mathbf\Psi}_{22}=\frac{\Gamma\left(  1+\lambda+\frac{\gamma}{2}\right)
}{\Gamma\left(  \frac{\gamma}{2}-\lambda\right)  }\zeta^{\gamma/2}\psi\left(
1+\lambda+\frac{\gamma}{2},\gamma+1;\zeta\right)  e^{-\zeta/2}e^{-i\pi\frac{\gamma}%
{4}}.
\]
By construction,  $\mathbf\Psi$ satisfies the jumps
relations in ($\Psi2$). Using formulas (7.26), (7.27) and (7.29) from \cite[appendix]{Its07b}, %
we can write explicitly the matrix $\mathbf\Psi$ in all regions.
Since the local behavior of $\psi(a,b;z)$ depends only on the value of the parameter $b$, by construction, all rows of $\widehat{\mathbf{\Psi}}$ have the same asymptotics as $\zeta\to 0$. Hence, it is sufficient to analyze the first row. 

From formulas (13.5.5) and (13.5.12) from \cite{abramowitz/stegun:1972} if follows that for $\zeta\in$
\ding{178}, $\widehat{\mathbf\Psi}$ has the behavior described in
$(\Psi3)$, as $\zeta\rightarrow0$. Indeed,  for $\gamma>0$,
\[
\widehat{\mathbf\Psi}_{11}=\mathcal{O}\left(  \zeta^{\gamma/2}\right),
\quad
\widehat{\mathbf\Psi}_{12}=\mathcal{O}\left(
\zeta^{-\gamma/2}\right) ;
\]
for $\gamma=0$,
\[
\widehat{\mathbf\Psi}_{11}=\mathcal{O}\left(  1\right),
\quad
\widehat{\mathbf\Psi}_{12}=\mathcal{O}\left(  \ln \zeta\right) ;
\]
and for $-1<\gamma<0$,%
\[
\widehat{\mathbf\Psi}_{11}=\mathcal{O}\left(  \zeta^{\gamma
/2}\right),
\quad
\widehat{\mathbf\Psi}_{12}=\mathcal{O}\left(
\zeta^{\gamma/2}\right) .
\] 
Analogously we can check that $\mathbf\Psi$ satisfies ($\Psi3$) in all regions of the plane.

Finally, using formula (13.1.22) from \cite{abramowitz/stegun:1972},%
\[
\begin{vmatrix}
\phi\left(  a,b;\zeta\right)  & \psi\left(  a,b;\zeta\right) \\
\phi^{\prime}\left(  a,b;\zeta\right)  & \psi^{\prime}\left(  a,b;\zeta\right)
\end{vmatrix}
=-\frac{\Gamma\left(  b\right)  e^{\zeta}}{\zeta^{b}\Gamma\left(
a\right)  },%
\]%
as well as the differential relations (13.4.23) and (13.4.10) from \cite{abramowitz/stegun:1972}, we easily get that
\[
\left\vert
\begin{array}
[c]{cc}%
\frac{\Gamma\left(  b-a\right)  }{\Gamma\left(  b\right)  }\zeta^{\frac{b-1}{2}%
}\phi\left(  a,b;\zeta\right)  e^{-\zeta/2}e^{\frac{i\pi\gamma}{4}} & -\zeta^{\frac
{b-1}{2}}\psi\left(  a,b;\zeta\right)  e^{-\zeta/2}e^{-\frac{i\pi\gamma}{4}}\\
\frac{\Gamma\left(  1+a\right)  }{\Gamma\left(  b\right)  }\zeta^{\frac{b-1}{2}%
}\phi\left(  a+1,b;\zeta\right)  e^{-\zeta/2}e^{\frac{i\pi\gamma}{4}} & \frac
{\Gamma\left(  1+a\right)  }{\Gamma\left(  -\left(  1+a-b\right)  \right)
}\zeta^{\frac{b-1}{2}}\psi\left(  a+1,b;\zeta\right)  e^{-\zeta/2}e^{-\frac{i\pi\gamma}%
{4}}%
\end{array}
\right\vert =1.
\]
This implies tha $\det\  \widehat{\mathbf\Psi}  =1$, and, by construction,
 $\det \mathbf \Psi  =1$, which concludes the proof.
\end{proof}
In order to construct the analytic function $\mathbf E_n$ in \eqref{Def-P} we need to study also the asymptotic behavior of $\mathbf \Psi$ at infinity. Let us introduce the notation
\begin{gather}
\upsilon_n\isdef \upsilon_n\left(  \lambda\right)  =\frac{\left(  \lambda
+\frac{\gamma}{2}\right)  _{n}\left(  \lambda-\frac{\gamma}{2}\right)  _{n}%
}{n!},\label{def-an}\\
\tau_{\lambda} \isdef \frac{\Gamma\left(  -\lambda+\frac{\gamma}{2}\right)  }{\left(
-\frac{\gamma}{2}-\lambda\right)  \Gamma\left(  \frac{\gamma}{2}%
+\lambda\right)  }=-\frac{\Gamma\left(  \frac{\gamma}{2}-\lambda\right)
}{\Gamma\left(  \frac{\gamma}{2}+\lambda+1\right)  }; \label{def-tau}%
\end{gather}
observe that
\begin{equation}\label{prop-a1}
\tau_{-\lambda}=\overline{\tau_{\lambda}},\quad
\upsilon_n\left(-\lambda\right)=\overline{\upsilon_n\left(\lambda\right)},\quad \text{and} \quad
\upsilon_{1}=\left(  \lambda^{2}-\frac
{\gamma^{2}}{4}\right)\in \R\,. %
\end{equation}

\begin{lemma}\label{lemma_asymptoticsPsi}
As $\zeta\rightarrow\infty$, $\zeta\in\mathbb{C}\backslash\Gamma$,%
\begin{equation}\label{asymptoticsPsi}
\mathbf \Psi\left(  \zeta\right)  =\left[  \mathbf{I} +\sum_{n=1}^{R-1}\frac{1}{\zeta^{n}}\left(
\begin{array}
[c]{cc}%
\left(  -1\right)  ^{n}\upsilon_n & n\tau_{\lambda}\overline{\upsilon_n}\\
\left(  -1\right)  ^{n}n\overline{\tau_{\lambda}}\upsilon_n & \overline{\upsilon_n}%
\end{array}
\right)  +\OO\left(  \left\vert \zeta\right\vert ^{-R}\right)  \right]
\zeta^{-\lambda\sigma_{3}}e^{\frac{-\zeta\sigma_{3}}{2}} M^{-1}\left(  \zeta\right)
\end{equation}%
 with $\upsilon_n$ defined by \eqref{def-an}, $\tau_{\lambda}$\ defined by \eqref{def-tau}, $\lambda=i\log(c)/\pi$, and%
\[
M\left(  \zeta\right)  \isdef  
\begin{cases}
e^{\frac{\gamma}{4}\pi i\sigma_{3}}e^{-\lambda\pi i\sigma_{3}}, & \frac{\pi
}{2}<\arg\zeta<\pi,\\
e^{-\frac{\gamma}{4}\pi i\sigma_{3}}e^{-\lambda\pi i\sigma_{3}}, & \pi
<\arg\zeta<\frac{3\pi}{2},\\
e^{\frac{\gamma}{4}\pi i\sigma_{3}} 
\begin{pmatrix}
0 & 1\\ 
-1 & 0
\end{pmatrix}  , & -\frac{\pi}{2}<\arg\zeta<0,\\
e^{-\frac{\gamma}{4}\pi i\sigma_{3}} 
\begin{pmatrix}
0 & 1\\
-1 & 0
\end{pmatrix} , & 0<\arg\zeta<\frac{\pi}{2},
\end{cases}
\]%
where we use the main branch of $\zeta^{-\lambda}=e^{-\lambda\log\zeta}$ with
the cut along $i\mathbb{R}_{-}$.
\end{lemma}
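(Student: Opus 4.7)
The plan is to derive the asymptotic expansion by applying the classical Poincar\'e expansions of the confluent hypergeometric functions at infinity to the explicit piecewise representation \eqref{sol Psi1}--\eqref{sol Psi8} of $\mathbf{\Psi}$, and then reading off the sector-dependent factor $M(\zeta)$. The main analytic inputs are the standard Poincar\'e expansion
\[
\psi(a,\gamma+1;\zeta)\sim \zeta^{-a}\sum_{k=0}^{\infty}(-1)^k \frac{(a)_k(a-\gamma)_k}{k!}\zeta^{-k},\qquad |\arg\zeta|<\tfrac{3\pi}{2},
\]
from Abramowitz--Stegun (13.5.2), together with the connection formula
\[
\phi(a,\gamma+1;\zeta)=\frac{\Gamma(\gamma+1)}{\Gamma(\gamma-a+1)}e^{\mp i\pi a}\psi(a,\gamma+1;\zeta)+\frac{\Gamma(\gamma+1)}{\Gamma(a)}e^{\pm i\pi(a-\gamma-1)}e^{\zeta}\psi(\gamma+1-a,\gamma+1;\zeta e^{\mp i\pi})
\]
(upper/lower sign according as $\Im\zeta>0$ or $<0$). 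The first produces the asymptotics of every $H$-entry of $\widehat{\mathbf{\Psi}}$, and the second splits every $G$-entry into an exponentially decaying $\zeta^{-\lambda}$ piece and an exponentially growing $\zeta^{\lambda}$ piece, each controlled again by the $\psi$ asymptotics.

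I would then carry out the bookkeeping in one reference sector, say \ding{178}, where $\mathbf{\Psi}=\widehat{\mathbf{\Psi}}$ and $\Re\zeta>0$, so that the decaying contribution of $\phi$ is exponentially smaller than the growing one and can be absorbed into the $\OO(|\zeta|^{-R})$ error. Substituting into \eqref{sol Psi7} and factoring $\zeta^{-\lambda\sigma_3}e^{-\zeta\sigma_3/2}$ to the right, the Gamma-function prefactors of \eqref{sol Psi7} collapse via the identities
\[
\frac{\Gamma(1-\lambda+\gamma/2)}{\Gamma(\lambda+\gamma/2)}=\upsilon_1\,\tau_\lambda,\qquad \frac{\Gamma(1+\lambda+\gamma/2)}{\Gamma(\gamma/2-\lambda)}=-\frac{1}{\tau_\lambda}.
\]
The resulting Pochhammer expressions are reorganised using $(1+x)_{n-1}=(x)_n/x$, which, applied to both $x=\lambda\pm\gamma/2$, yields
\[
\frac{(1+\lambda+\gamma/2)_{n-1}(1+\lambda-\gamma/2)_{n-1}}{(n-1)!}=\frac{n\,\upsilon_n}{\upsilon_1},
\]
and the analogous identity with $\lambda\mapsto -\lambda$ gives the corresponding formula with $\overline{\upsilon_n}$. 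The four entries of $A_n$ then read off exactly as in the statement; the linear factor $n$ in the off-diagonal coefficients is produced by this identity, and the consistency of the $(2,1)$ entry rests on $-\tau_\lambda\overline{\tau_\lambda}\upsilon_1=1$, which follows from $\tau_\lambda\tau_{-\lambda}=1/((\gamma/2)^2-\lambda^2)$ combined with $\upsilon_1=\lambda^2-\gamma^2/4$. The same computation simultaneously yields $M(\zeta)=e^{-\gamma\pi i\sigma_{3}/4}\bigl(\begin{smallmatrix}0&1\\-1&0\end{smallmatrix}\bigr)$ in this sector.

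For the remaining sectors no new asymptotic analysis is needed. In \eqref{sol Psi0} the matrix $\mathbf{\Psi}$ differs from $\widehat{\mathbf{\Psi}}$ by right multiplication by an explicit constant product of the $J_k^{\pm 1}$, and since the prefactor $\mathbf{I}+\sum_n A_n/\zeta^n$ sits to the left of $\zeta^{-\lambda\sigma_3}e^{-\zeta\sigma_3/2}$, it is inherited unchanged in every sector. The entire sector-dependence is absorbed into the constant factor on the right, which must account for both the jump products and the branch monodromy of $\zeta^{-\lambda}$ across its cut along $i\R_-$. Matching this constant in each of the four sub-sectors listed in the statement reproduces the four piecewise formulas for $M(\zeta)$, and the assertion $\det\mathbf{\Psi}\equiv 1$ is then immediate because $\det M(\zeta)=1$ throughout.

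The principal obstacle is the combinatorial collapse leading to the off-diagonal coefficients $n\tau_{\lambda}\overline{\upsilon_n}$ and $(-1)^n n\overline{\tau_{\lambda}}\upsilon_n$: neither the linear factor $n$ nor the compact form involving $\tau_\lambda$ is visible before the two Gamma-ratio identities above are applied and the Pochhammer identity is used in the correct order. The uniformity of the $\OO(|\zeta|^{-R})$ remainder in each open sector is standard, being inherited from the uniformity of the Poincar\'e expansion of $\psi$ on closed subsectors of $|\arg\zeta|<3\pi/2$.
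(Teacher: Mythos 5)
Your overall strategy coincides with the paper's: both proofs substitute the classical large-$\zeta$ expansions of the confluent hypergeometric functions (A\&S 13.5.1--13.5.2, i.e.\ \eqref{for-13.5.1}--\eqref{for-13.5.2}) into the explicit sectorial formulas \eqref{sol Psi1}--\eqref{sol Psi8} and then recognize the coefficients via Gamma/Pochhammer identities. Your identities $\Gamma(1-\lambda+\gamma/2)/\Gamma(\lambda+\gamma/2)=\upsilon_1\tau_\lambda$, $(1+\lambda+\gamma/2)_{n-1}(1+\lambda-\gamma/2)_{n-1}/(n-1)!=n\,\upsilon_n/\upsilon_1$ and $-\tau_\lambda\overline{\tau_\lambda}\,\upsilon_1=1$ all check out against \eqref{def-an}--\eqref{def-tau} and \eqref{prop-a1}. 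The only real difference of route is the choice of reference sector: you work in \ding{178}, where the first column of $\widehat{\mathbf\Psi}$ consists of $G$-entries and you must invoke the connection formula together with a dominant-balance argument, whereas the paper starts in \ding{172}, where the representation \eqref{sol Psi1} is built entirely from $H$-functions and \eqref{for-13.5.2} applies entry by entry.

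The step that does not survive scrutiny is the claim that ``for the remaining sectors no new asymptotic analysis is needed'' because $\mathbf\Psi$ differs from $\widehat{\mathbf\Psi}$ only by constant right factors. The exact identity $\mathbf\Psi=\widehat{\mathbf\Psi}\cdot\prod J_k^{\pm1}$ does not transport an \emph{asymptotic} expansion across the rays $\Re\zeta=0$: in \ding{178} you discarded the recessive part of the $\phi$-column as exponentially small, but exactly that part becomes dominant once $\arg\zeta$ passes $\pi/2$ (and again past $3\pi/2$), so the expansion of $\widehat{\mathbf\Psi}$ derived in \ding{178} is simply not valid in, say, \ding{172} or \ding{175}, and right-multiplying it by constant matrices cannot repair this --- the discarded correction is exponentially large there relative to the term you kept. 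This is visible in the statement itself: for $0<\arg\zeta<\pi/2$ the factor $M^{-1}(\zeta)$ is anti-diagonal (the column carrying $e^{\zeta/2}$ is the first one), while for $\pi/2<\arg\zeta<\pi$ it is diagonal, reflecting a genuine change in which solutions are recessive. You therefore need one fresh computation on each side of the Stokes rays: the paper does \ding{172} for $\Re\zeta<0$ and \ding{176} for $\Re\zeta>0$ (where the swap $\bigl(\begin{smallmatrix}0&-1\\1&0\end{smallmatrix}\bigr)$ appears), and only the passage between adjacent sectors on the \emph{same} side (e.g.\ \ding{173}$\to$\ding{174} or \ding{177}$\to$\ding{178}, which differ by the constant diagonal factor $e^{i\gamma\pi\sigma_3/2}$, plus the $\zeta^{-\lambda}$ monodromy between \ding{175} and \ding{176}) is the pure bookkeeping you describe. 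Your computation in \ding{178} covers the right half-plane; the left half-plane still requires its own expansion.
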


\begin{proof}
We use the classical formulas (13.5.1) and (13.5.2) from \cite{abramowitz/stegun:1972} for the confluent hypergeometric
functions. If we take $b=\gamma+1$, and multiply $\phi$ and $\psi$ by $(z^{\gamma
/2}e^{-z/2})$, using \eqref{Def-G-H}, we have that, as $\left\vert z\right\vert
\rightarrow\infty$, 
\begin{equation}\label{for-13.5.1}
G\left(  a,\gamma;z\right)  =\left\{
\begin{array}
[c]{cc}%
\tfrac{\Gamma\left(  \gamma+1\right)  }{\Gamma\left(  a\right)  }%
z^{a-\gamma/2}\left[  \tfrac{1}{z}\left(  1+\sum_{n=1}^{R-1}\tfrac{\left(  \gamma+1-a\right)
_{n}\left(  1-a\right)  _{n}}{n!z^{n}}+\OO\left(  \left\vert z\right\vert
^{-R}\right)  \right)  \right]  e^{z/2}, & \operatorname{Re}z>0,\\
\tfrac{\Gamma\left(  \gamma+1\right)  }{\Gamma\left(  \gamma+1-a\right)
}e^{a\pi i}z^{\gamma/2-a}\left[  1%
+\sum_{n=1}^{R-1}\tfrac{(a)_{n}\left(  a-\gamma\right)  _{n}}{\left(  -1\right)
^{n}n!z^{n}}+\OO\left(  \left\vert z\right\vert ^{-R}\right)  \right]
e^{-z/2}, & \operatorname{Re}z<0,
\end{array}
\right.%
\end{equation}
\begin{equation} \label{for-13.5.2}
H\left(  a,\gamma;z\right)  = z^{\gamma/2-a}\left[  1+\sum_{n=1}^{R-1}\tfrac{(a)_{n}\left(  a-\gamma\right)
_{n}}{\left(  -1\right)  ^{n}n!z^{n}}+\OO\left(  \left\vert z\right\vert
^{-R}\right)  \right]  e^{-z/2}.%
\end{equation}%
Replacing these expansions in the expression for $\mathbf{\Psi}$ for $\zeta\in$ \ding{172}, $\frac{\pi}{2}<\arg\zeta<\frac{3\pi}{4}$ and
$\frac{-\pi}{2}<\arg\left(  e^{-\pi i}\zeta\right)  <\frac{-\pi}{4}$, we get for $\left\vert
\zeta\right\vert \rightarrow\infty$,
\begin{multline*}
\mathbf \Psi\left(  \zeta\right)  =\left(
\begin{array}
[c]{c}%
\zeta^{-\lambda}\left[  1+\sum_{n=1}^{R-1}%
\tfrac{\left(  \lambda+\frac{\gamma}{2}\right)  _{n}\left(  \lambda
-\frac{\gamma}{2}\right)  _{n}}{\left(  -1\right)  ^{n}n!\zeta^{n}}+\OO\left(
\left\vert \zeta\right\vert ^{-R}\right)  \right]  e^{-\zeta/2}e^{\lambda\pi
i}\\
\frac{-\Gamma\left(  1+\lambda+\frac{\gamma}{2}\right)  }{\Gamma\left(
\frac{\gamma}{2}-\lambda\right)  }\zeta^{-1-\lambda}\left[  1+\sum_{n=1}^{R-1}\tfrac{\left(  1+\lambda+\frac{\gamma}{2}\right)
_{n}\left(  1+\lambda-\frac{\gamma}{2}\right)  _{n}}{\left(  -1\right)
^{n}n!\zeta^{n}}+\OO\left(  \left\vert \zeta\right\vert ^{-R}\right)  \right]
e^{-\zeta/2}e^{\lambda\pi i}%
\end{array}
\right. \\
\left.
\begin{array}
[c]{c}%
-\frac{\Gamma\left(  1-\lambda+\frac{\gamma}{2}\right)  }{\Gamma\left(
\frac{\gamma}{2}+\lambda\right)  }\left(  e^{-\pi i}\zeta\right)
^{-1+\lambda}\left[  1+\sum_{n=1}^{R-1}%
\tfrac{\left(  1-\lambda+\frac{\gamma}{2}\right)  _{n}\left(  1-\lambda
-\frac{\gamma}{2}\right)  _{n}}{\left(  -1\right)  ^{n}\left(  -1\right)
^{n}\zeta^{n}n!}+\OO\left(  \left\vert \zeta\right\vert ^{-R}\right)  \right]
e^{\zeta/2}\\
\left(  e^{-\pi i}\zeta\right)  ^{\lambda}\left[  1+\sum_{n=1}^{R-1}\tfrac{\left(  \frac{\gamma}{2}-\lambda\right)
_{n}\left(  -\frac{\gamma}{2}-\lambda\right)  _{n}}{\left(  -1\right)
^{n}\left(  -1\right)  ^{n}\zeta^{n}n!}+\OO\left(  \left\vert \zeta\right\vert
^{-R}\right)  \right]  e^{\zeta/2}%
\end{array}
\right)  e^{-\frac{\gamma\pi i}{4}\sigma_{3}},%
\end{multline*}
which can be rewritten using notation \eqref{def-an}--\eqref{def-tau} as
\begin{multline*}
=\left[  \mathbf{I}+\left(
\begin{array}
[c]{c}%
\left[  \sum_{n=1}^{R-1}\left(  -1\right)  ^{n}%
\tfrac{\upsilon_n}{\zeta^{n}}\right] \\
\overline{\tau_{\lambda}}\left[ \sum_{n=1}^{R-1}\left(
-1\right)  ^{n}\tfrac{n\upsilon_n}{\zeta^{n}}\right]
\end{array}
\right.  \right. \\
\left.  \left.
\begin{array}
[c]{c}%
\tau_{\lambda}\left[ \sum_{n=1}^{R-1}%
\tfrac{n\overline{\upsilon_n}}{\zeta^{n}}\right] \\
\left[  \sum_{n=1}^{R-1}\tfrac{\overline
{\upsilon_n}}{\zeta^{n}}\right]
\end{array}
\right)  +\OO\left(  \left\vert \zeta\right\vert ^{-R}\right)  \right]  \left(
\zeta^{-\lambda}e^{-\zeta/2}e^{\lambda\pi i}e^{-\frac{\gamma\pi i}{4}}\right)
^{\sigma_{3}}.%
\end{multline*}
This yields \eqref{asymptoticsPsi} for $\pi/2<\zeta<3\pi/4$; this expansion is also valid for $\zeta\in$ \ding{173}. A comparison of \eqref{sol Psi2} with \eqref{sol Psi3} shows that the behavior for $\zeta\in$ \ding{174}, $\pi<\arg\zeta<\frac{5\pi}{4}$, can be obtained from the expansion in \ding{173} by multiplying by $e^{i\frac{\gamma}{2}  \pi \sigma_{3}}$, which again yields \eqref{asymptoticsPsi}  for $\pi<\zeta<5\pi/4$.
It is easy to see that asymptotics in \ding{174} is also valid in \ding{175}.

Using \eqref{sol Psi5}, \eqref{for-13.5.1}, \eqref{for-13.5.2} and comparing the expression for $\mathbf{\Psi}$ in \ding{172} and \ding{176},  we conclude that for $\zeta\in$ \ding{176}, $-\frac{\pi}{2}<\arg\zeta<-\frac{\pi}{4}$ ($\frac{\pi}{2}<\arg\left(\zeta\right) e^{\pi i}<\frac{3\pi}{4}$ and $\operatorname{Re}\zeta
>0$), as $\left\vert \zeta\right\vert \rightarrow\infty$, 
\begin{align*}
\mathbf \Psi\left(  \zeta\right)  =& \left(
\begin{array}
[c]{c}%
-\frac{\Gamma\left(  1-\lambda+\frac{\gamma}{2}\right)  }{\Gamma\left(
\frac{\gamma}{2}+\lambda\right)  }\left(  e^{\pi i}\zeta\right)  ^{-1+\lambda
}\left[  1+\sum_{n=1}^{R-1}\tfrac{\left(
1-\lambda+\frac{\gamma}{2}\right)  _{n}\left(  1-\lambda-\frac{\gamma}%
{2}\right)  _{n}}{\left(  -1\right)  ^{n}\left(  -1\right)  ^{n}\zeta^{n}%
n!}+\OO\left(  \left\vert \zeta\right\vert ^{-R}\right)  \right]  e^{-\lambda\pi
i}e^{\zeta/2}\\
 \left(  e^{\pi i}\zeta\right)  ^{\lambda}\left[  1+\sum_{n=1}^{R-1}\tfrac{\left(  \frac{\gamma}{2}-\lambda\right)
_{n}\left(  -\frac{\gamma}{2}-\lambda\right)  _{n}}{\left(  -1\right)
^{n}\left(  -1\right)  ^{n}\zeta^{n}n!}+\OO\left(  \left\vert \zeta\right\vert
^{-R}\right)  \right]  e^{-\lambda\pi i}e^{\zeta/2}%
\end{array}
\right. \\&
\left.
\begin{array}
[c]{c}%
-\zeta^{-\lambda}\left[  1+\sum_{n=1}^{R-1}%
\tfrac{\left(  \lambda+\frac{\gamma}{2}\right)  _{n}\left(  \lambda
-\frac{\gamma}{2}\right)  _{n}}{\left(  -1\right)  ^{n}n!\zeta^{n}}+\OO\left(
\left\vert \zeta\right\vert ^{-R}\right)  \right]  e^{-\zeta/2}\\
-\frac{-\Gamma\left(  1+\lambda+\frac{\gamma}{2}\right)  }{\Gamma\left(
\frac{\gamma}{2}-\lambda\right)  }\zeta^{-1-\lambda}\left[  1+\sum_{n=1}^{R-1}\tfrac{\left(  1+\lambda+\frac{\gamma}{2}\right)
_{n}\left(  1+\lambda-\frac{\gamma}{2}\right)  _{n}}{\left(  -1\right)
^{n}n!\zeta^{n}}+\OO\left(  \left\vert \zeta\right\vert ^{-R}\right)  \right]
e^{-\zeta/2}%
\end{array}
\right)  e^{-\frac{\gamma\pi i}{4}\sigma_{3}}%
\\  =& \left[  \left(
\begin{array}
[c]{ll}%
0 & -1\\
1 & 0
\end{array}
\right)  +\sum_{n=1}^{R-1}\frac{1}{\zeta^{n}}\left(
\begin{array}
[c]{cc}%
n\tau_{\lambda}\overline{\upsilon_n} & -\left(  -1\right)  ^{n}\upsilon_n\\
\overline{\upsilon_n} & -\left(  -1\right)  ^{n}n\overline{\tau_{\lambda}}\upsilon_n%
\end{array}
\right)  +\OO\left(  \left\vert \zeta\right\vert ^{-R}\right)  \right] \\
& \times\left(
\begin{array}
[c]{ll}%
0 & -1\\
1 & 0
\end{array}
\right)  ^{-1}\left(
\begin{array}
[c]{ll}%
0 & -1\\
1 & 0
\end{array}
\right)  \zeta^{\lambda\sigma_{3}}e^{-\frac{\gamma}{4}\pi
i\sigma_{3}}e^{\frac{\zeta}{2}\sigma_{3}}%
\\
= & \left[  \mathbf{I}+\sum_{n=1}^{R-1}\frac{1}%
{\zeta^{n}}\left(
\begin{array}
[c]{cc}%
\left(  -1\right)  ^{n}\upsilon_n & n\tau_{\lambda}\overline{\upsilon_n}\\
\left(  -1\right)  ^{n}n\overline{\tau_{\lambda}}\upsilon_n & \overline{\upsilon_n}%
\end{array}
\right)  +\OO\left(  \left\vert \zeta\right\vert ^{-R}\right)  \right]
\zeta^{-\lambda\sigma_{3}}e^{\frac{\gamma}{4}\pi i\sigma
_{3}}e^{-\frac{\zeta}{2}\sigma_{3}}\left(
\begin{array}
[c]{ll}%
0 & -1\\
1 & 0
\end{array}
\right).  %
\end{align*}%
This expression is valid in  \ding{177} as well. Finally, comparing \eqref{sol Psi6} with \eqref{sol Psi7} we see that the behavior for $\zeta\in$ \ding{178}, $0<\arg\zeta<\frac{\pi}{4}$, corresponds to that in \ding{177} times the constant factor $e^{i\frac{\gamma}{2}  \pi \sigma_{3}}$, which yields \eqref{asymptoticsPsi}.
Since the asymptotics for $\zeta\in$ \ding{179}  is the same than in \ding{178}, this concludes the proof of Lemma.
\end{proof}

Now we are ready to build $\mathbf P^{(1)}$ in \eqref{Def-P}. Using the properties of $\varphi$ 
we define an analytic function $f$ in a neighborhood of $x_0$,
\begin{equation}\label{Def-f}%
f\left(  z\right)  \isdef \begin{cases}
2i\arccos x_0 -2 \log\varphi\left(  z\right)   ,  & \text{
for } \Im z>0, \\
2i\arccos x_0 + 2 \log\varphi\left(  z\right)  ,  & \text{
for } \Im z<0,
\end{cases}
\end{equation}
where we take the main branch of the logarithm. Using that $\varphi_{+}\left(  x\right)  \varphi_{-}\left(  x\right)     =1$ on $( -1,1)$ we  conclude that $f$ can be extended to a holomorphic function in $\C\setminus \left( (-\infty,-1] \cup [1, +\infty) \right)$. For $|z|<1$ we have
\begin{equation} \label{f--0}
f\left(  z\right)  =\dfrac{2i}{\sqrt{1-x_0^2}}\left(z-x_0\right)+\OO\left(  (z-x_0)^2\right)  ,\ \text{as
}z\rightarrow x_0\text{.}%
\end{equation}
Hence, for $\delta>0$ sufficiently small, $f$ is a
conformal mapping of $U_{x_0}$. Moreover, since
\begin{equation} \label{phiplus}
\varphi_+(x)=x+i\sqrt{1-x^2}=e^{i \arccos x}, \quad x\in (-1,1),
\end{equation}
then
\begin{equation}\label{fOnR}
    f(x)=2 i \left(\arccos x_0 - \arccos x \right), \quad x\in (-1,1),
\end{equation}
so that $f$ maps the real interval $\left(  -1,x_0\right)  $ one-to-one onto the purely imaginary interval
$\left(  2i(\arccos x_0 - \pi), 0\right)  $, as well as $\left( x_0 , 1\right)  $ one-to-one onto the purely imaginary interval
$\left( 0, 2i\arccos x_0\right)  $.

We can always deform our contours $\Sigma_k$ close to $z=x_0$ in such a way that
\begin{align*}
f\left( \Sigma_{1}\cap U_{x_0}\right) \subset \Gamma_4, \quad f\left( \Sigma_{2}\cap U_{x_0}\right) \subset \Gamma_6,& \quad f\left( \Sigma_{3}\cap U_{x_0}\right) \subset \Gamma_2, \quad f\left( \Sigma_{4}\cap U_{x_0}\right) \subset \Gamma_8\\
f\left( \Sigma_{5}\cap U_{x_0}\right) \subset \Gamma_3,& \quad f\left( \Sigma_{6}\cap U_{x_0}\right) \subset \Gamma_7.
\end{align*}
With this convention, set
\begin{equation}\label{def zeta}
\zeta  \isdef  nf\left(  z\right), \quad z \in U_{x_0},
\end{equation}%
and, we define
\begin{equation}
\mathbf P^{\left(  1\right)  }\left(  z\right)  \isdef \mathbf \Psi\left(  nf\left(  z\right)
\right), \quad z \in U_{x_0}.  \label{sol-P1}%
\end{equation}
By ($\Psi$1)--($\Psi$3) and \eqref{f--0}, this matrix-valued function has the jumps and the local behavior at $z=x_0$ specified in \eqref{jumpsForP1_1}--\eqref{jumpsForP1_3}. Taking into account the definition \eqref{Def-f} we get that
$$
e^{n f(z)} =\varphi_+^{2n}(x_0) \, \varphi^{\mp 2n}(z), \quad \text{for } \pm \Im z >0,
$$
and for $\left[  nf\left(  z\right)  \right]  ^{\lambda}$ we take the cut along $\left(  -\infty,x_0\right]  $.
Since
\begin{equation*}
    \left[    f\left(  z\right)  \right]  ^{\lambda
}=\left\vert  f\left(  z\right)  \right\vert ^{\lambda}\exp\left(  -\frac{\log
c}{ \pi}\arg\left(   f\left(  z\right)  \right)  \right) ,
\end{equation*}
straightforward computations show that
\begin{equation}\label{jumpFonMinus}
\left[    f\left(  x\right)  \right] _\pm ^{\lambda
}=\begin{cases}
 \left\vert  f\left(  x\right)  \right\vert ^{\lambda} c^{-1/2} , & \text{for } x_0<x<1,\\
\left\vert  f\left(  x\right)  \right\vert ^{\lambda}c^{-1/2 \mp 1} , & \text{for } -1<x<x_0,
\end{cases}
\end{equation}
where we assume the natural orientation of the interval.

In order to satisfy (P$_0$3) above, we define
\begin{equation}
\begin{split}
\mathbf E_{n}\left(  z\right) \isdef\mathbf  N\left(  z\right)  W\left(  z\right)  ^{\sigma_{3}}
\begin{cases}
(nf (z))^{\lambda\sigma_{3}}\varphi_+^{n \sigma_3}(x_0) \, e^{i\frac {\gamma \pi}{4}\sigma_3}c^{\sigma_{3}},&\text{if } z\in Q_+^R, \\[1mm]
\begin{pmatrix}
0 & 1\\
-1 & 0
\end{pmatrix}(nf (z))^{\lambda\sigma_{3}}\varphi_+^{n \sigma_3}(x_0) \, e^{i\frac {\gamma \pi}{4}\sigma_3},& \text{if } z\in Q_-^R,\\[6mm]
(nf (z))^{\lambda\sigma_{3}}\varphi_+^{n \sigma_3}(x_0) \, e^{-i\frac {\gamma \pi}{4}\sigma_3}c^{\sigma_{3}},&\text{if } z\in Q_+^L,\\[1mm]
\begin{pmatrix}
0 & 1\\
-1 & 0
\end{pmatrix}(nf (z))^{\lambda\sigma_{3}}\varphi_+^{n \sigma_3}(x_0) \, e^{-i\frac {\gamma \pi}{4}\sigma_3},& \text{if } z\in Q_-^L.
\end{cases}
\end{split}
 \label{DefinitionEN}%
\end{equation}
By construction, $\mathbf E_{n}$ is analytic in $U_{x_0}\backslash \left(\mathbb{R} \cup \Sigma_5\cup\Sigma_6 \right)$. Furthermore, by (N2) and \eqref{W1}, for $x\in \left(
x_0-\delta ,x_0\right)  \cup (x_0,x_0+\delta ) $,
\begin{equation*}
    \begin{split}
     W_-\left(  x\right)  ^{-\sigma_{3}}  \mathbf N_{-}^{-1}(x)\mathbf N_{+}(x) W_+\left(  x\right)  ^{\sigma_{3}} & =
\begin{pmatrix}
0 & \frac{w_{c,\gamma}(x)}{W_-(x)W_+(x)} \\
-\frac{W_-(x)W_+(x)}{w_{c,\gamma}(x)}   & 0
\end{pmatrix}\\ & =\begin{pmatrix}
0 & c^{\pm 1} \\
-c^{\mp 1}   & 0
\end{pmatrix}, \quad \text{for } \pm x>x_0;
    \end{split}
\end{equation*}%
and, by \eqref{W2}, for $z\in \Sigma_6 \cap U_{x_0}$ (oriented from above to bellow) and for $z\in \Sigma_5 \cap U_{x_0}$ (oriented from bellow to above) we have,
\begin{equation*}
    W_-\left(  z\right)  ^{-\sigma_{3}}  \mathbf N_{-}^{-1}(z)\mathbf N_{+}(z) W_+\left(  z\right)  ^{\sigma_{3}}  =
\begin{pmatrix}
W_+(z)/W_-(z) & 0 \\
0 & W_-(z)/W_+(z)
\end{pmatrix} = \, e^{i\frac {\gamma \pi}{2}\sigma_3}.
\end{equation*}%
From \eqref{jumpFonMinus} and \eqref{DefinitionEN} it follows that
$$
\mathbf E_{n-}^{-1}\left(  z\right) \mathbf E_{n+}\left(  z\right)=\mathbf  I,\quad  \text{for }  z\in U_{x_0}\backslash \left\{x_0 \right\}.
$$
In this form, $x_0$ is the only possible isolated singularity of $\mathbf E_n$ in $U_{x_0}$. The following proposition shows that this is in fact a removable singularity of 
$\mathbf E_{n}$:
\begin{proposition} \label{propE0}
$$
\lim_{z\to x_0} \mathbf E_n(z)= \frac{\sqrt{2}}{2}\, D_{\infty}^{\sigma_{3}} \begin{pmatrix}
e^{-i \arcsin(x_0)/2} & e^{i \arcsin(x_0)/2} \\ -e^{i \arcsin(x_0)/2} & e^{-i \arcsin(x_0)/2}
\end{pmatrix} e^{i\eta_n \sigma_3},
$$
with $\eta_n$ defined by
\begin{equation}\label{defOfEta}
\eta_{n}\isdef \frac{\log c}{\pi}\log\left(  4n\sqrt{1-x_{0}^{2}}\right)  + n\arccos(x_0) - \frac{\gamma \pi}{4} - \Phi \left(x_0\right)
\end{equation}
and $\Phi$ given by \eqref{def_PhinoC}. In particular, $\mathbf E_n$ is analytic in $U_{x_0}$.
\end{proposition}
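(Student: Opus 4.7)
The construction of $\mathbf E_n$ in \eqref{DefinitionEN} is designed precisely so that the elementary jump computations carried out in the paragraph preceding this proposition --- combining (N2) with \eqref{W1} to cancel jumps on the real interval $(x_0-\delta,x_0)\cup(x_0,x_0+\delta)$; combining \eqref{W2} with the opposite $e^{\pm i\gamma\pi/4\sigma_3}$ factors on $Q^L$ versus $Q^R$ to cancel jumps across $\Sigma_5\cup\Sigma_6$; and using \eqref{jumpFonMinus} together with the swap matrix inserted in the lower-half quadrants to cancel the apparent jump of $(nf(z))^{\lambda\sigma_3}$ across $(-1,x_0)$ --- make $\mathbf E_n$ holomorphic throughout $U_{x_0}\setminus\{x_0\}$. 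Hence the proposition reduces to producing a finite limit of $\mathbf E_n$ at $x_0$; Riemann's theorem on removable singularities then upgrades this to analyticity across $x_0$, and the limit is automatically independent of the direction of approach.

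I would evaluate the limit from the single quadrant $Q_+^R$, where
\[
\mathbf E_n(z) = D_\infty^{\sigma_3}\,\mathbf A(z)\,\bigl[W(z)\,D(z,w_{c,\gamma})^{-1}\bigr]^{\sigma_3}\,(nf(z))^{\lambda\sigma_3}\,\varphi_+(x_0)^{n\sigma_3}\,e^{i\gamma\pi/4\sigma_3}\,c^{\sigma_3}.
\]
The splitting \eqref{SzegoTotal} organizes two potentially singular cancellations. First, the factor $(z-x_0)^{-\gamma/2}$ inside $D(z,w_{1,\gamma})^{-1}$ (read off from \eqref{SzegoPartial1}) cancels exactly against the $(z-x_0)^{\gamma/2}$ built into $W(z)$ in $Q^R$ by \eqref{Def-W}. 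Second, the logarithmic behavior of $D(z,\Xi_c)$ given by \eqref{D-0} pairs with $(nf(z))^{\lambda\sigma_3}$ linearized through \eqref{f--0}: the $(z-x_0)^{\pm\lambda}$ pieces cancel, and the surviving diagonal contribution reduces to $(4n\sqrt{1-x_0^2})^{\lambda\sigma_3}$ together with explicit powers of $c$ that combine with the $c^{\sigma_3}$ in \eqref{DefinitionEN} and the $c^{1/2}$ built into $D_\infty$ through \eqref{Doo}. The smooth factor $D(z,h)$ and the algebraic boundary values of $(z-1)^{\alpha/2}$, $(z+1)^{\beta/2}$, and $\varphi_+(x_0)^{(\alpha+\beta+\gamma)/2}$ then assemble, via Lemma~\ref{Lem-D+}, into the Szeg\H{o} phase $e^{-i\Phi(x_0)\sigma_3}$, with $\Phi$ given by \eqref{def_PhinoC}.

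Collecting the resulting diagonal phases --- $e^{in\arccos x_0\,\sigma_3}$ from $\varphi_+(x_0)^{n\sigma_3}$; $(4n\sqrt{1-x_0^2})^{\lambda\sigma_3}$ from the $D(z,\Xi_c)$ vs.\ $(nf)^\lambda$ interaction; $e^{i\gamma\pi/4\sigma_3}$ from \eqref{DefinitionEN}; $e^{-i\Phi(x_0)\sigma_3}$ from Lemma~\ref{Lem-D+}; and $e^{i\lambda\arcsin x_0\,\sigma_3}$ built into $D_\infty$ --- and consolidating them via $\arccos x_0 + \arcsin x_0 = \pi/2$, I expect to identify the product with $e^{i\eta_n\sigma_3}$, where $\eta_n$ is exactly the quantity in \eqref{defOfEta}. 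The remaining left factor is $D_\infty^{\sigma_3}\mathbf A(x_0^+)$; using $\varphi_+(x_0)^{1/2}=e^{i\arccos(x_0)/2}$ and $(z^2-1)^{1/4}\to e^{i\pi/4}(1-x_0^2)^{1/4}$ from the upper half plane, $\mathbf A(x_0^+)$ simplifies to the announced unitary matrix, with rotation angle $\arcsin(x_0)/2$. The main obstacle is purely bookkeeping: one must keep coherent principal branches of $(z-1)^{\alpha/2}$, $(z+1)^{\beta/2}$, $(z-x_0)^{\gamma/2}$, $\varphi(z)^{1/2}$, and $(nf(z))^\lambda$ (with cut along $(-\infty,x_0]$, per \eqref{jumpFonMinus}) while approaching $x_0$ from $Q_+^R$, and record each limiting phase without sign errors.
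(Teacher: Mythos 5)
Your proposal takes essentially the same route as the paper's proof: restrict to the single quadrant $Q_+^R$, pair $D(z,\Xi_c)$ with $(nf(z))^{\lambda}$ via \eqref{D-0} and \eqref{f--0}, pair $D(z,w)$ with $W(z)$ via Lemma~\ref{Lem-D+} and \eqref{W4}, collect the surviving constants into $e^{i\eta_n\sigma_3}$ (the paper's $m_n(z)$ in \eqref{def_m}), and evaluate $\mathbf A$ at $x_0$ from above, with removability following from the jump cancellations established just before the proposition. The plan is correct and matches the paper step for step.
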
%
\begin{proof}
Since $\mathbf E_n$ is analytic in a neighborhood of $x_0$, it is sufficient to analyze its limit as $z\to x_0$ from the first quarter of the plane, $z\in Q_+^R$.
By \eqref{D-0} and \eqref{f--0},
\begin{align*}
\lim_{\stackrel{z\to x_0}{z\in Q_+^R}} D\left(  z, \Xi_c \right)
 f\left(  z\right)    ^{-\lambda} =\lim_{\stackrel{z\to x_0}{z\in Q_+^R}}  c^{1+ \frac{i}{\pi}\, \log\left(z/2 \right) - \frac{i}{\pi}\, \log\left( f(z) \right)}     =c^{3/2  } \left(4\sqrt{1-x_0^2}\right)^{-\lambda }.
\end{align*}
On the other hand, by \eqref{boundaryValuesFinal} and \eqref{W4} (notice that $w_{1,\gamma}$ defined in \eqref{defwcanalytic} coincides with $w$ defined in \eqref{defwanalytic}),
\begin{align*}
\lim_{\stackrel{z\to x_0}{z\in Q_+^R}} D\left(  z, w \right)
 W\left(  z\right)    ^{-1} = c^{-1/2  } e^{i\Phi(x_0)} e^{i\gamma \pi/2}.
\end{align*}
Summarizing,
\begin{align*}
\lim_{\stackrel{z\to x_0}{z\in Q_+^R}} D\left(  z, w_{c,\gamma} \right)^{-1}
 W\left(  z\right)  f\left(  z\right)    ^{ \lambda}  =\frac{\left(4\sqrt{1-x_0^2}\right)^\lambda}{c} \,  e^{-i\Phi(x_0)-i\gamma \pi/2}.
\end{align*}
By \eqref{sol-N} and \eqref{DefinitionEN}, if $z\in Q_+^R$ ($\Im z>0$),
\begin{equation}\label{expressionForE_n}
  \mathbf   E_n(z) = D_{\infty}^{\sigma_{3}}\,
\mathbf A(z) \, m_{n}(z)^{\sigma_3},
\end{equation}
with
\begin{equation}\label{def_m}
    m_n(z)\isdef \frac{W\left(  z\right)  f\left(  z\right)    ^{ \lambda}}{D\left(  z, w_{c,\gamma} \right)} \, \varphi_+^n(x_0) n^\lambda e^{i\gamma \pi/4} c=e^{\eta_n},
\end{equation}
where $\eta_n$ is defined in \eqref{defOfEta}. 

Gathering the limits computed above and using that
\begin{align*}
\lim_{\stackrel{z\to x_0}{z\in Q_+^R}} A_{11}(z)  = e^{-i\arcsin (x_0)/2}=\lim_{\stackrel{z\to x_0}{z\in Q_+^R}}\overline {A_{12}(z)}
\end{align*}
and by definition of $\eta_n$, the statement follows.
\end{proof}

Therefore, by construction the matrix-valued function $\mathbf P_{x_0}$ given by \eqref{Def-P} satisfies conditions (P$_{0}$1)--(P$_{0}$4). Moreover, it is easy to check that
\begin{equation*}
 \det \mathbf P_{x_0}\left( z\right)  =1 \quad \text{for every }z\in U_{x_0}\backslash\Sigma.
\end{equation*}

At this point all the ingredients are ready to define the final transformation. We take
\begin{equation}
\mathbf R\left(  z\right)  \isdef
\begin{cases}
\mathbf S\left(  z\right)  \mathbf N^{-1}\left(  z\right),  &  z\in\mathbb{C}%
\backslash\left\{  \Sigma\cup U_{-1}\cup U_{x_0}\cup U_{1}\right\} ;\\
\mathbf S\left(  z\right) \mathbf  P_{\zeta}^{-1}\left(  z\right),  &  z\in
U_{j}\setminus \Sigma, \; j\in \{-1, x_0, 1 \}.
\end{cases}
  \label{Def-R}%
\end{equation}
$\mathbf R$ is analytic in the complement to the contours $\Sigma_R$ depicted in Fig.~\ref{fig:lenses3}.%
\begin{figure}[htb]
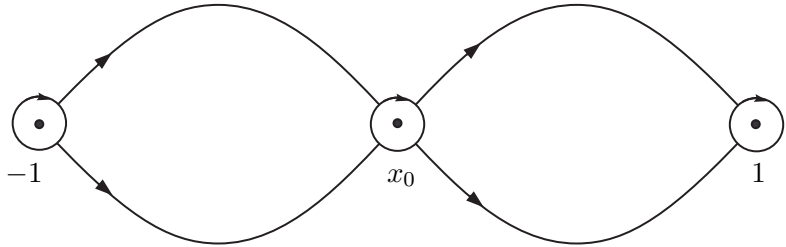

\centering \begin{overpic}[scale=1.5]%
{lenses3}%
     \put(4,12){$\small -1 $}
\put(92,12){$\small 1$}
 \put(49,12){$\small x_0 $}
\end{overpic}
\caption{Contours $\Sigma_R$.}
\label{fig:lenses3}
\end{figure}

Let
\[
\Sigma_{R}^{out}\isdef \Sigma_{R}\setminus\left(  \partial U_{-1}%
\cup\partial U_{x_0}\cup\partial U_{1}\right) .
\]
$\mathbf R$ satisfies the jump relation
$$\mathbf R_{+}(z)=\mathbf R_{-}(z) (\mathbf  I + \mathbf  \Delta(z)),\, z\in \Sigma_{R},$$
with
\[
\mathbf \Delta\left(  s\right)  \isdef
\begin{cases}
\mathbf N\left(  s\right)
\begin{pmatrix}
1 & 0\\
 w_{c,\gamma}\left(  s\right)^{-1}    \varphi\left(  s\right)^{-2n} & 1
\end{pmatrix}
\mathbf  N^{-1}\left(  s\right) - \mathbf{I},  & \text{for }s\in\Sigma_{R}^{out}; \\
\mathbf P_{\zeta}\left(  s\right) \mathbf  N^{-1}\left(  s\right) -\mathbf {I}, & \text{for }s\in\partial
U_{\zeta}, \; j\in \{-1,x_0,1 \}.
\end{cases}
\]%
Standard arguments show that $\mathbf \Delta$ has an asymptotic expansion in powers of $1/n$ of the form
\begin{equation}
\mathbf \Delta\left(  s\right)  \sim\sum_{k=1}^{\infty}\frac{\mathbf \Delta_{k}\left(
s,n\right)  }{n^{k}},\text{ \ \ as }n\rightarrow\infty,\text{ uniformly for
}s\in\Sigma_{R}, \label{DD-n-00}%
\end{equation}
and, for $k\in\mathbb{N}$,
\begin{equation}\label{delta=0}
\mathbf\Delta_{k}\left(  s\right)  =0\text{, \ \ for }s\in\Sigma_{R}^{out}\text{.}%
\end{equation}
So, it remains to determine $\mathbf \Delta_k$ on $\partial U_{x_0}$. Here we find explicitly only the first term, $\mathbf \Delta_1$. %
Using \eqref{sol-N}, \eqref{Def-W}, \eqref{Def-P}, \eqref{Def-f}, \eqref{asymptoticsPsi}, \eqref{def zeta}, \eqref{DefinitionEN} and \eqref{prop-a1},  we obtain
\begin{align*}
\mathbf  \Delta\left(  s\right)
&   =\mathbf  E_{n}(s)\left[  \frac{\left(\lambda^2-\gamma^2/4 \right)}{n f(s)}
\begin{pmatrix}
-1  & \tau_\lambda \\
-\overline{\tau_{ \lambda}}  & 1 %
\end{pmatrix} +\OO\left(  \frac{1}{n^{2}}\right)  \right]  \mathbf E_{n}^{-1}(s), \quad s\in\partial U_{x_0}, \quad n\rightarrow\infty.
\label{comp DD}%
\end{align*}%
Let us define
\begin{equation}\label{DD-1}
  \mathbf \Delta_1\left(  s\right) \isdef \frac{\left(\lambda^2-\gamma^2/4\right)}{  f(s)}\,  \mathbf E_{n}(s)
\begin{pmatrix}
-1  & \tau_\lambda \\
-\overline{\tau_{ \lambda}}  & 1 %
\end{pmatrix}  \mathbf   E_{n}^{-1}(s), \quad s\in\partial U_{x_0}.
\end{equation}
Using that by \eqref{DefinitionEN},
$$
\mathbf E_n(s)= \mathbf F(s) \,  \left(\varphi_+(x_0)^n n^\lambda  \right)  ^{ \sigma_{3}}= \mathbf F(s) \,  \left(e^{in\arccos (x_0)} c^{\frac{i}{\pi}\, \log n}  \right)  ^{ \sigma_{3}},
$$
with%
\begin{equation*} 
\begin{split}
\mathbf F\left(  s\right) \isdef  \begin{cases}
 \mathbf N\left(  s\right)  W\left(  s\right)  ^{\sigma_{3}} c^{ \sigma_{3}} e^{\pm \frac{\gamma \pi}{4}\sigma_3}  f\left( s\right)  ^{\lambda\sigma_{3}}  , &\text{if } \Im s>x_0, \\
\mathbf  N\left(  s\right)  W\left( s\right)  ^{\sigma_{3}}   \begin{pmatrix}
0 & 1\\
-1 & 0
\end{pmatrix} e^{\pm \frac{\gamma \pi}{4}\sigma_3}  f\left(  s\right)   ^{ \lambda\sigma_{3}}, & \text{if } \Im s<x_0,
\end{cases}
\end{split}
\end{equation*}%
where we take $\pm$ for $\pm \Re s>x_0$, we conclude that, for $s\in \partial U_{x_0}$,
$\Delta_{1}\left(  z,n\right)  $ is uniformly bounded in
$n$; indeed, $\mathbf F$ does not depend on $n$ and
$$\left\vert e^{in\arccos x_{0}}c^{\frac{i\log n}{2\pi}}\right\vert =1, \quad
\forall n\in \N.$$%
So $\mathbf \Delta_{1}$ in \eqref{DD-1} is genuinely the first coefficient in the expansion \eqref{DD-n-00}.

Similar analysis can be performed for $\mathbf \Delta_k\left(  \cdot, n\right)$, $k\geq 2$, taking higher order terms in the expansion of $\mathbf \Psi$ in \eqref{asymptoticsPsi}.

The explicit expression \eqref{asymptoticsPsi} and the local behavior of $f$ show that $\mathbf \Delta_{1}\left(  s, n\right)  $ has an analytic continuation to $U_{x_0}$ except for $x_0$, where it has a simple pole. Again, similar conclusion is valid for other $\mathbf \Delta_{k}\left(  s, n\right)  $, except that now the pole has order $k$.

Like in \cite[Theorem 7.10]{MR2001f:42037} we obtain from \eqref{DD-n-00} that
\begin{equation}
\mathbf R\left(  z\right)  \sim\mathbf {I}+\sum_{j=1}^{\infty}\frac{\mathbf R^{\left(
j\right)  }\left(  z, n\right)  }{n^{j}},\text{ \ \ as }n\rightarrow
\infty\text{,} \label{R-n--00}%
\end{equation}
uniformly for $z\in\mathbb{C}\backslash\left\{  \partial U_{ -1}%
\cup\partial U_{x_0}\cup\partial U_{ 1}\right\}  $, where each
$\mathbf R^{\left(  j\right)  }\left(  z\right)  $ is analytic, uniformly bounded in $n$, and%
\[
\mathbf R^{\left(  j\right)  }\left(  z, n\right)  =\OO\left(  \frac{1}{z}\right)  \quad \text{as }z\rightarrow\infty\text{.}%
\]
Since $\mathbf R^{\left(  1\right)  }$ is analytic in the complement of $\partial
U_{-1}\cup\partial U_{x_0}\cup\partial U_{1}$ (see \eqref{delta=0}) and vanishes at infinity, by Sokhotskii-Plemelj formulas,
\[
\mathbf R^{\left(  1\right)  }\left(  z,n\right)  =\frac{1}{2\pi i}\int_{\partial
U_{-1}\cup\partial U_{x_0}\cup\partial U_{1}}\frac
{\mathbf \Delta_{1}\left(  s, n\right)  }{s-z}\, ds.
\]
Recall that $\mathbf \Delta_{1}$ can be extended analytically inside $U_j$'s with simple poles at $\pm 1 $ and $x_0$; let us denote by $A^{\left(  1\right)  }\left(  n\right)  $,
$B^{\left(  1\right)  }\left(  n\right)  $ and $C^{\left(  1\right)  }\left(
n\right)  $ the residues of $\mathbf \Delta_{1}(\cdot, n)$ at $1$, $-1$ and $x_0$, respectively. Then the residue calculus gives
\begin{equation}
\mathbf R^{\left(  1\right)  }\left(  z,n\right)  =
\begin{cases}
\dfrac{A^{\left(  1\right)  }\left(  n\right)  }{z-1}+\dfrac{B^{\left(
1\right)  }\left(  n\right)  }{z+1}+\dfrac{C^{\left(  1\right)  }\left(
n\right)  }{z-x_0}, & \text{for }z\in\mathbb{C}\backslash\left\{  U_{ -1}\cup
U_{ x_0}\cup U_{ 1}\right\}  ;\\[3mm]
\dfrac{A^{\left(  1\right)  }\left(  n\right)  }{z-1}+\dfrac{B^{\left(
1\right)  }\left(  n\right)  }{z+1}+\dfrac{C^{\left(  1\right)  }\left(
n\right)  }{z-x_0}-\mathbf \Delta_{1}\left(  z,n\right) , & \text{for }z\in
U_{ -1}\cup U_{x_0}\cup U_{ 1}  .
\end{cases}
  \label{comp-R1}%
\end{equation}
Residues $A^{\left(  1\right)  }\left(  n\right)  $ and $B^{\left(  1\right)  }\left(  n\right)  $ are in fact independent of $n$; they have been determined in \cite[Section 8]{MR2087231}:
\begin{equation}
\begin{split}
\label{A-B}
A^{\left(  1\right)  }\left(  n\right) & =A^{(1)} =\frac{4\alpha^2-1}{16}\, D_{\infty}^{\sigma_3}
    \begin{pmatrix}
        -1 & i \\
        i & 1
    \end{pmatrix}D_{\infty}^{-\sigma_3},
\\
B^{\left(  1\right)  }\left(  n\right) & =B^{(1)} = 
    \frac{4\beta^2-1}{16}\, D_{\infty}^{\sigma_3}
    \begin{pmatrix}
        1 & i\\
        i & -1
    \end{pmatrix}
    D_{\infty}^{-\sigma_3}
\end{split}
\end{equation}
(notice however that the constant $D_\infty$ is different with respect to \cite{MR2087231}). The value of the remaining residue $C^{\left(
1\right)  }\left(  n\right)  $ is given in the following

\begin{proposition}
\label{prop coef C} If we denote 
\[
C^{\left(  1\right)  }\left(  n\right)  = 
\begin{pmatrix}
C_{11}^{(1)}(n) & C_{12}^{(1)}(n)\\
C_{21}^{(1)}(n) & C_{11}^{(1)}(n)
\end{pmatrix}
\]%
then the entries are given explicitly by:%
\begin{equation}
C_{11}^{\left(  1\right)  }\left(  n\right)  =-\left(  \frac{\log^{2}c}%
{2\pi^{2}}+\frac{\gamma^{2}}{8}\right)  x_{0}+\sqrt{\frac{\log^{2}c}{4\pi
^{2}}+\frac{\gamma^{2}}{16}}\sin\theta_{n} \label{C11 n}%
\end{equation}%
\begin{equation}
C_{12}^{\left(  1\right)  }\left(  n\right)  =iD_{\infty}^{2}\left(
\frac{\log^{2}c}{2\pi^{2}}+\frac{\gamma^{2}}{8}-\sqrt{\frac{\log^{2}c}%
{4\pi^{2}}+\frac{\gamma^{2}}{16}}\cos\left(  \arcsin\left(  x_{0}\right)
-\theta_{n}\right)  \right)  \label{C12 n}%
\end{equation}%
\begin{equation}
C_{21}^{\left(  1\right)  }\left(  n\right)  =\frac{i}{D_{\infty}^{2}}\left(
\frac{\log^{2}c}{2\pi^{2}}+\frac{\gamma^{2}}{8}+\sqrt{\frac{\log^{2}c}%
{4\pi^{2}}+\frac{\gamma^{2}}{16}}\cos\left(  \arcsin\left(  x_{0}\right)
+\theta_{n}\right)  \right)  \label{C21 n}%
\end{equation}%
\begin{equation}
C_{22}^{\left(  1\right)  }\left(  n\right)  =\left(  \frac{\log^{2}c}%
{2\pi^{2}}+\frac{\gamma^{2}}{8}\right)  x_{0}-\sqrt{\frac{\log^{2}c}{4\pi
^{2}}+\frac{\gamma^{2}}{16}}\sin\theta_{n} \label{C22 n}%
\end{equation}
where
\begin{equation}\label{definition-theta_n}
\theta_{n}=2\eta_{n}+\varsigma,
\end{equation}
with $\eta_n$ defined by \eqref{defOfEta} and $\varsigma=-2\arg\Gamma\left(
\frac{\gamma}{2}+\lambda\right)  -\arg\left(  \frac{\gamma}{2}+\lambda\right)
$.
\end{proposition}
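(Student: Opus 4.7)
The plan is to compute $C^{(1)}(n)=\operatorname{Res}_{s=x_{0}}\mathbf{\Delta}_{1}(s)$ directly from the explicit formula \eqref{DD-1}. Since Proposition~\ref{propE0} guarantees that $\mathbf E_{n}$ is analytic at $x_{0}$, the only contribution to the residue comes from the simple pole $1/f(s)$; by \eqref{f--0}, $f'(x_{0})=2i/\sqrt{1-x_{0}^{2}}$, and therefore
\begin{equation*}
C^{(1)}(n)=\frac{(\lambda^{2}-\gamma^{2}/4)\sqrt{1-x_{0}^{2}}}{2i}\,\mathbf E_{n}(x_{0})\,M\,\mathbf E_{n}^{-1}(x_{0}),\qquad M\isdef\begin{pmatrix}-1 & \tau_{\lambda}\\ -\overline{\tau_{\lambda}} & 1\end{pmatrix}.
\end{equation*}

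I would then substitute the explicit value $\mathbf E_{n}(x_{0})=\tfrac{\sqrt{2}}{2}\,D_{\infty}^{\sigma_{3}}\,U\,e^{i\eta_{n}\sigma_{3}}$ from Proposition~\ref{propE0}, where $U$ denotes the matrix involving $\arcsin(x_{0})/2$ appearing there. The scalar prefactor cancels against its inverse; conjugation by $e^{i\eta_{n}\sigma_{3}}$ simply multiplies the off-diagonal entries of $M$ by $e^{\pm 2i\eta_{n}}$; the outer conjugation by $D_{\infty}^{\sigma_{3}}$ only attaches the factor $D_{\infty}^{\pm 2}$ on the off-diagonal entries, leaving the diagonal untouched. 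A short computation gives $\det U=2\sqrt{1-x_{0}^{2}}$, so that $U^{-1}$ is available in closed form.

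The main step, and essentially the only one that goes beyond bookkeeping, is to identify the polar form of $\tau_{\lambda}$ with the $\varsigma$ given in the statement. Since $\lambda=i\log c/\pi$ is purely imaginary and $\gamma\in\R$, one has $\Gamma(\gamma/2-\lambda)=\overline{\Gamma(\gamma/2+\lambda)}$; combining this with $\Gamma(\gamma/2+\lambda+1)=(\gamma/2+\lambda)\Gamma(\gamma/2+\lambda)$ in \eqref{def-tau} yields
\begin{equation*}
|\tau_{\lambda}|=\bigl(\gamma^{2}/4+\log^{2}c/\pi^{2}\bigr)^{-1/2},\qquad \arg\tau_{\lambda}=\pi-2\arg\Gamma\bigl(\tfrac{\gamma}{2}+\lambda\bigr)-\arg\bigl(\tfrac{\gamma}{2}+\lambda\bigr)=\pi+\varsigma,
\end{equation*}
so that $\tau_{\lambda}e^{2i\eta_{n}}=-|\tau_{\lambda}|e^{i\theta_{n}}$ and $\overline{\tau_{\lambda}}e^{-2i\eta_{n}}=-|\tau_{\lambda}|e^{-i\theta_{n}}$, with $\theta_{n}$ as in \eqref{definition-theta_n}.

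From this point the computation is purely algebraic. Writing $a=\arcsin(x_{0})/2$ and $V\isdef e^{i\eta_{n}\sigma_{3}}Me^{-i\eta_{n}\sigma_{3}}$, an elementary multiplication of $UVU^{-1}$ produces $e^{2ia}-e^{-2ia}=2i\sin(\arcsin x_{0})=2ix_{0}$ on the diagonal and $e^{\pm 2ia}e^{\mp i\theta_{n}}+e^{\mp 2ia}e^{\pm i\theta_{n}}=2\cos(\arcsin(x_{0})\mp\theta_{n})$ off the diagonal. Multiplying by the prefactor $(\lambda^{2}-\gamma^{2}/4)\sqrt{1-x_{0}^{2}}/(2i)$, using the identity $(\lambda^{2}-\gamma^{2}/4)|\tau_{\lambda}|=-2\sqrt{\log^{2}c/(4\pi^{2})+\gamma^{2}/16}$, and inserting the $D_{\infty}^{\pm 2}$ factors on the off-diagonal entries reproduces \eqref{C11 n}--\eqref{C22 n} exactly. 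A useful consistency check is that $\operatorname{tr} M=0$ forces $C_{22}^{(1)}=-C_{11}^{(1)}$, in agreement with \eqref{C11 n} and \eqref{C22 n}.
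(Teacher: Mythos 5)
Your proposal is correct and follows essentially the same route as the paper: both compute the residue from \eqref{DD-1} via $f'(x_0)=2i/\sqrt{1-x_0^2}$, insert the value of $\mathbf E_n(x_0)$ from Proposition~\ref{propE0}, put $\tau_\lambda$ in the polar form $-|\tau_\lambda|e^{i\varsigma}$ using the conjugation symmetry of $\Gamma$ (the paper cites the corresponding Abramowitz--Stegun identities), and finish with the same conjugation algebra and the identity $(\lambda^2-\gamma^2/4)|\tau_\lambda|=-\sqrt{\gamma^2/4+\log^2 c/\pi^2}$. The observation that any scalar prefactor of $\mathbf E_n(x_0)$ cancels in the conjugation, together with the trace check $C^{(1)}_{22}=-C^{(1)}_{11}$, is a nice touch but does not change the substance of the argument.
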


\begin{proof}

Taking into account \eqref{f--0} and \eqref{DD-1} we conclude that the
residue $C^{\left(  1\right)  }\left(  n\right)  $ of $\mathbf \Delta_{1}\left(
z,n\right)  $ at $z=x_{0}$ is given by%
\begin{equation}
C^{\left(  1\right)  }\left(  n\right)  =\frac{\left(  \lambda^{2}%
-\gamma^2/4\right)  \sqrt{1-x_{0}^{2}}}{2i}\mathbf E_{n}\left(
x_{0}\right)  \left(
\begin{array}
[c]{cc}%
-1 & \tau_{\lambda}\\
-\overline{\tau_{\lambda}} & 1
\end{array}
\right) \mathbf  E_{n}^{-1}\left(  x_{0}\right).  \label{C-1-n  2}%
\end{equation}
Since $\mathbf E_{n}$ is analytic in a neighborhood of $x_{0}$ (see Proposition \ref{propE0}), 
\begin{equation}\label{dem Delta 1}
\mathbf E_{n}\left(  x_{0}\right) =\lim\limits_{\substack{z\rightarrow x_{0}\\z\in Q_+^R}}\mathbf E_{n}\left(
z\right)  =\frac{1}{\sqrt{2}\sqrt[4]{1-x_{0}^{2}}}D_{\infty}^{\sigma_{3}%
}\left(
\begin{array}
[c]{cc}%
e^{-i\arcsin\left(  x_{0}\right)  /2} & e^{i\arcsin\left(  x_{0}\right)  /2}\\
-e^{i\arcsin\left(  x_{0}\right)  /2} & e^{-i\arcsin\left(  x_{0}\right)  /2}%
\end{array}
\right)  e^{i\eta_{n}\sigma_{3}},%
\end{equation}
so that
\begin{equation}
\mathbf E_{n}^{-1}\left(  x_{0}\right)  =\frac{1}{\sqrt{2}\sqrt[4]{1-x_{0}^{2}}%
}e^{-i\eta_{n}\sigma_{3}}\left(
\begin{array}
[c]{cc}%
e^{-i\arcsin\left(  x_{0}\right)  /2} & -e^{i\arcsin\left(  x_{0}\right)
/2}\\
e^{i\arcsin\left(  x_{0}\right)  /2} & e^{-i\arcsin\left(  x_{0}\right)  /2}%
\end{array}
\right)  D_{\infty}^{-\sigma_{3}}\text{.} \label{dem C1n -1}%
\end{equation}
From \eqref{dem Delta 1} we obtain
\begin{multline*}
C^{\left(  1\right)  }\left(  n\right)  = \frac{\left(  \lambda^{2}-\gamma^{2}/4\right)  }{4i}D_{\infty
}^{\sigma_{3}}\left(
\begin{array}
[c]{cc}%
e^{-i\arcsin\left(  x_{0}\right)  /2} & e^{i\arcsin\left(  x_{0}\right)  /2}\\
-e^{i\arcsin\left(  x_{0}\right)  /2} & e^{-i\arcsin\left(  x_{0}\right)  /2}%
\end{array}
\right)  e^{i\eta_{n}\sigma_{3}}\\
\times\left(
\begin{array}
[c]{cc}%
-1 & \tau_{\lambda}\\
-\overline{\tau_{\lambda}} & 1
\end{array}
\right)  e^{-i\eta_{n}\sigma_{3}}\left(
\begin{array}
[c]{cc}%
e^{-i\arcsin\left(  x_{0}\right)  /2} & -e^{i\arcsin\left(  x_{0}\right)
/2}\\
e^{i\arcsin\left(  x_{0}\right)  /2} & e^{-i\arcsin\left(  x_{0}\right)  /2}%
\end{array}
\right)  D_{\infty}^{-\sigma_{3}}.%
\end{multline*}%
Using formulas (6.1.28), (6.1.23) and (4.3.2) from \cite{abramowitz/stegun:1972} and \eqref{def-tau} we can rewrite%
$$
\tau_{\lambda}   =%
-\frac{\overline{\Gamma\left(  \frac{\gamma}{2}+\lambda\right)  }}{\left(
\frac{\gamma}{2}+\lambda\right)  \Gamma\left(  \frac{\gamma}{2}+\lambda
\right)  }
 =%
-\left\vert \frac{\overline{\Gamma\left(  \frac{\gamma}{2}+\lambda\right)
}}{\left(  \frac{\gamma}{2}+\lambda\right)  \Gamma\left(  \frac{\gamma}%
{2}+\lambda\right)  }\right\vert e^{i\varsigma}
 =
-\frac{e^{i\varsigma}}{\sqrt{\gamma^{2}/4+\left\vert \lambda\right\vert ^{2}}},%
$$%
where
\begin{align*}%
\varsigma &  =\arg\left(  \frac{\overline{\Gamma\left(  \frac{\gamma}%
{2}+\lambda\right)  }}{\left(  \frac{\gamma}{2}+\lambda\right)  \Gamma\left(
\frac{\gamma}{2}+\lambda\right)  }\right).
\end{align*}%
Then,
\begin{equation*}
C^{(1)}(n)=\frac{\left(  \lambda^{2}-\gamma^{2}/4\right)  }{2}D_{\infty
}^{\sigma_{3}}
\begin{pmatrix}
x_{0}-\frac{\sin\theta_{n}}{\sqrt{\gamma^{2}/4+\left\vert \lambda\right\vert ^{2}}}%
 & -i+i\frac{\cos\left(  \arcsin\left(  x_{0}\right)  -\theta_{n}\right)}{\sqrt{\gamma^{2}/4+\left\vert \lambda\right\vert
^{2}}} \\
-i-i\frac{\cos\left(
\arcsin\left(  x_{0}\right)  +\theta_{n}\right) }{\sqrt{\gamma^{2}/4+\left\vert \lambda\right\vert ^{2}}} & -x_{0}-\frac{\sin\left(  -\theta
_{n}\right)}%
{\sqrt{\gamma^{2}/4+\left\vert \lambda\right\vert ^{2}}}
\end{pmatrix}
 D_{\infty}^{-\sigma_{3}}.%
\end{equation*}
We can simplify this expression using that $\left(  \lambda^{2}-\frac{\gamma^{2}}{4}\right)  %
=-\left(\frac{\log^{2}c}{\pi^{2}}+\frac{\gamma^{2}}{4}\right)  %
=-\left(  \sqrt{\frac{\gamma^2}{4}+\left\vert \lambda\right\vert ^{2}}\right)  ^{2}$,  and this settles the proof.
\end{proof}

\section{Proof of Theorem 1}

Unraveling  the transformations $\mathbf Y\rightarrow \mathbf T\rightarrow \mathbf S\rightarrow \mathbf R$ we can obtain an expression for $\mathbf Y$.
Repeating the arguments in \cite{MR2000g:47048} (see also \cite{MR2087231},  \cite[Section 3]{FMS} and \cite{Van:2003}) we see that the recurrence coefficients \eqref{RecRelphin} are given by
\begin{align}
a_{n}^{2}  &%
=\lim\limits_{z\rightarrow\infty}\left(  -\frac{D_{\infty}^{2}%
}{2i}+z\mathbf R_{12}\left(  z,n\right)  \right)  \left(  z\mathbf R_{21}\left(  z,n\right)
+\frac{1}{2iD_{\infty}^{2}}\right)  \text{,}\label{def an}\\
b_{n}  &  =\lim\limits_{z\rightarrow\infty}z \left(  1-\mathbf  R_{11}\left(
z,n+1\right)  \mathbf R_{22}\left(  z,n\right)  \right)  \text{.} \label{def bn}%
\end{align}
Taking into account the expression for $\mathbf R^{(1)}$ in \eqref{comp-R1}, as well as \eqref{A-B} and Proposition \ref{prop coef C}, we obtain for $a_n$:
\begin{equation*}
a_{n}^2 =\frac{1}{4}-\frac{1}{n}\sqrt{\frac{\gamma^{2}}{16}+\frac{\log^{2}c}{4\pi^{2}}}%
\cos\left(  \arcsin x_{0}\right)  \cos\left(  \theta_{n}\right)  +O\left(
\frac{1}{n^{2}}\right), \quad  n\rightarrow\infty\text{,}%
\end{equation*}%
where $\theta_{n}$ is given by \eqref{definition-theta_n}. It also can be rewritten in the form
\begin{equation*}
\theta_{n}    =\frac{2\log c}{\pi}\log\left(  4n\sqrt{1-x_{0}^{2}}\right)
+  2n  \arccos x_{0} - \Theta,
\end{equation*}%
with $\Theta$ defined in \eqref{definitionTheta}. 
This proves \eqref{asymptotics An}.

Analogously,
\begin{equation*}
b_n=-\frac{\sqrt{\frac{\log^{2}c}{4\pi^{2}}+\frac{\gamma^{2}}{16}}\left(
\sin\theta_{n+1}-\sin\theta_{n}\right)  }{n}+O\left(  \frac{1}{n^{2}}\right)
, \quad n \rightarrow\infty.   
\end{equation*}%
By \eqref{definition-theta_n},%
$$
 \theta_{n+1}-\theta_n= 2\arccos (x_0) + 2 \frac{\log c}{\pi}\, \log\left( 1+\frac{1}{n}\right)   ,
$$%
and
\begin{align*}
\sin\theta_{n+1}-\sin\theta_{n} &  %
=\sin\left(  \theta_{n}+2\arccos
x_{0}\right)  -\sin\theta_{n}+\OO\left(\frac{1}{n}\right) \\
& =2\cos\left(  \theta_{n}+\arccos x_{0}\right)  \sin\left(  \arccos
x_{0}\right)+\OO\left(\frac{1}{n}\right) , \quad  n \rightarrow\infty.
\end{align*}%
Thus we obtain%
\begin{equation*}
b_{n}  =-\frac{1}{n}\sqrt{\frac{\log^{2}c}{\pi^{2}}+\frac{\gamma^2}{4}}\sin\left(
\arccos x_{0}\right)  \cos\left(  \theta_{n}+\arccos x_{0}\right)  +O\left(
\frac{1}{n^{2}}\right),
\end{equation*}%
which proves \eqref{asymptotics Bn}.

\section*{Acknowledgements}

AMF is partially supported by Junta de Andaluc\'{\i}a, grants FQM-229, FQM-481, and P06-FQM-01735, as well as by the Ministry of Science and Innovation
of Spain (project code MTM2008-06689-C02-01). VPS is spponsored by FCT (Portugal), under contract/grant
SFRH/BD/29731/2006.

We are grateful to Alexei Borodin for driving our attention to reference \cite{Borodin:2001xr} and for stimulating discussions.

\def\cprime{$'$}

\end{document}